% !!!IMPORTANT NOTE: Please read carefully all information including those preceded by % sign
\documentclass[12pt,reqno]{amsart}

\usepackage[pagewise]{lineno}%\linenumbers

\usepackage{paralist}
%%%%
%\usepackage[utf8]{inputenc}
\usepackage[english]{babel}
%%%%
\usepackage{amsmath,amsthm}

\usepackage{amssymb}
\usepackage{graphicx}

\usepackage[
  pagebackref=true,
  colorlinks,
]{hyperref}

  \usepackage{supertabular}
  \usepackage[all,cmtip]{xy}
  \usepackage{graphics} %% add this and next lines if pictures should be in esp format
  \usepackage{epsfig} %For pictures: screened artwork should be set up with an 85 or 100 line screen
 %\usepackage[colorlinks=true]{hyperref}
   % Warning: when you first run your tex file, some errors might occur, please just
   % press enter key to end the compilation process,  then it will be fine if you run your tex file again.
   % Note that it is highly recommended by AIMS to use this package.
\hypersetup{urlcolor=blue, citecolor=red}

  \textheight=8.2 true in
   \textwidth=5.0 true in
     \setcounter{page}{1}

% The next 5 line will be entered by an editorial staff.

 % Please minimize the usage of "newtheorem", "newcommand", and use
 % equation numbers only situation when they provide essential convenience
 % Try to avoid defining your own macros
\DeclareMathOperator{\Tr}{Tr}
\DeclareMathOperator{\dive}{div}

\DeclareMathOperator{\psl}{PSL}
\DeclareMathOperator{\gl}{GL}
\DeclareMathOperator{\aut}{Aut}

\DeclareMathOperator{\mon}{Mon}
\DeclareMathOperator{\SL}{SL}
\DeclareMathOperator{\End}{End}
\DeclareMathOperator{\sym}{Sym}

  %letras en negrita
  %letras tipo Black-Board
    %agranda notaciones numerica
\newtheorem{defi}{Definition}[section]
\newtheorem{lem}[defi]{Lemma}
\newtheorem{prop}[defi]{Proposition}

\newtheorem{teo}[defi]{Theorem}
\newtheorem{cor}[defi]{Corollary}
\newtheorem{obs}[defi]{Remark}

\newtheorem{THM}{Theorem}

\newcounter{qqq}%[chapter]
\vspace{.5cm} 
 % Todo esto sirve para reenumear los teoremas por letras  

%\linespread{1.43}%{para deixar espa\c{c}o entre lineas}
\linespread{1.2}
%%%%%%%%%%%%%%%%%%%%%%%%%%%%%%%%%%%%%%%%%%%%%%%%%%%%%%%%%

\newcommand{\Proj}{\mathbb{P}}

\newcommand{\con}{\nabla}

\newcommand{\Po}{\mathcal{P}}
\newcommand{\W}{\mathcal{W}}

\newcommand{\R}{\mathbb{R}}
\newcommand{\C}{\mathbb{C}}
\newcommand{\Z}{\mathbb{Z}}
\newcommand{\N}{\mathbb{N}}
\newcommand{\Hip}{\mathbb{H}}

\newcommand{\Fo}{\mathcal{F}}
\newcommand{\Go}{\mathcal{G}}
\newcommand{\Do}{\mathcal{D}}
\newcommand{\Ho}{\mathcal{H}}

\newcommand{\Ol}{\mathcal{O}}

\newcommand{\DR}{\mathcal{DR}}

\newcommand{\im}{\mathrm{im}\,}

 \begin{document}

 \title[Affine connections and Riccati distributions]{Affine connections on complex 
 compact surfaces and Riccati distributions}
 
 \author{Ruben Lizarbe}
 
 \address{UERJ, Universidade do Estado do Rio de Janeiro, Rua S\~ao Francisco Xavier, 
 524, Maracan\~a, 20550-900, Rio de Janeiro, Brazil. }
 
\email{ruben.monje@ime.uerj.br}

\makeatletter
\@namedef{subjclassname@2020}{\textup{2020} Mathematics Subject Classification}
\makeatother
\date{\today}
\subjclass[2020]{53B05, 37F75, 14C21}

\keywords{Structure affines, affine connections, holomorphic foliations, transverse structure, regular 
pencils and webs}

\maketitle

\begin{abstract}
Let $M$ be a complex surface. We show that there is a one-to-one 
correspondence between torsion-free affine connections on $M$ and Riccati distributions 
on $\Proj(TM)$. Furthermore, if $M$ is compact, then this correspondence
induces a one-to-one corres\-pondence between affine structures on $M$ and 
Riccati foliations on $\Proj(TM)$. As applications of this result, we classify  
the regular $k$-webs on compact complex surfaces for $k\geq 3$, and we also get a new proof of the classification of regular pencils of foliations on compact complex surfaces.
\end{abstract}

\smallskip
\noindent

%%%%%%%%%%%%%%%%%%%%%%%%%%%%%%%%%%%%%%%%%%%%%%%%%%%%%%%%%%%%%%%%%%%%%%%%%%%%%%%%%%%%%%%%%%%%%%%%%%%%%%%%%%%%%%%%%%%%%%%%%%%%%%%%%%%%%%%%%%%%%%%%%%%%%%%%%%

\section{Introduction}

There are numerous papers on compact complex surfaces that admit 
holomorphic affine connections (see 
\cite{Gunning}, \cite{Vitter}, \cite{Suwa}, \cite{Maehara}, \cite{Kob1}, 
\cite{Klingler} and \cite{Dumitrescu}).  
In \cite{Kob1}, Inoue, Koba\-yashi  
and Ochiai give a complete list of all compact complex (connected) surfaces
admitting affine holomorphic connections which are not necessarily flat. These surfaces 
are shown to be biholomorphic (up to a finite covering) to complex tori, 
primary Kodaira surfaces, affine Hopf surfaces, 
Inoue surfaces or elliptic surfaces over Riemann surfaces of genus 
$g\geq 2$ with odd first Betti number. Moreover, it is proved in \cite{Kob1} that 
all these surfaces have torsion-free flat affine 
connections that are equivalent to holomorphic affine structures, that is, to atlas with values 
in open subsets of $\C^2$ whose change of coordinate maps are locally constant mappings in the affine group 
$\gl (2, \C) \ltimes \C^2$. These surfaces are said to be affine. It is also known that 
they are quotients of a domain in $\C^2$ by a group consisting of affine 
transformations.

In \cite{Klingler} Klingler classifies holomorphic affine structures and holomorphic projective structures on compact complex surfaces. Also, in \cite{Dumitrescu} Dumitrescu classifies torsion-free holomorphic affine connections and shows that any normal holomorphic projective connection on a compact complex surface
has zero curvature.  Finally, in his thesis \cite{Zhao} Zhao has studied affine structures and birational structures. 

The aim of this paper is to study the correspondence between affine connections 
on complex surfaces and Riccati distributions. We list our main results.

\begin{THM} \label{T1}
Let $M$ be a complex surface. 
There is a one-to-one co\-rrespondence between
torsion-free affine connections on $M$ and Riccati distributions on $\Proj(TM)$. Furthermore, we 
have a one-to-one correspondence between 
affine structures on $M$ and parallelizable Riccati foliations on $\Proj(TM)$.   
\end{THM}

By a Riccati distribution on $\Proj(TM)$ we mean a codimension one re\-gu\-lar distribution on 
$\Proj(TM)$ that is transversal to every fiber of $\Proj^1$-bundle $\pi:\Proj(TM)\to M$.  

\begin{THM} \label{T2}
If $M$ is a compact complex surface, then a Riccati distribution on $\Proj(TM)$ is parallelizable. 
In particular,  
we have a one-to-one corres\-pondence between 
affine structures on $M$ and Riccati foliations on $\Proj(TM)$.  
\end{THM}

Since important examples of Riccati foliations are induced by pencils and $k$-webs, we can use 
the previous theorems to study regular pencils and $k$-webs on complex surfaces. 
A regular pencil of foliations $\Po=\{\Fo_t \}_{t\in\Proj^1}$ on $M$ is locally defined by 
a pencil of 1-forms $\Fo_t =\{\omega_t=0\}_{t\in\Proj^1}$ 
with $\omega_t=\omega_0 + t\omega_{\infty}$ and $\omega_0 \wedge  \omega_{ \infty}\neq 0$, thait 
is, $\Fo_0$ and $\Fo_{\infty}$ are transversal (see \cite{Alcides} or \cite{Puchuri}). 
We are able to give a new proof of the classification of regular pencils on compact complex 
surfaces proved by Puchuri \cite{Puchuri}.  

\begin{THM}\label{T3} 
If $M$ is a compact complex surface and $\Po$ is a re\-gu\-lar pencil on $M$, then $M$ is either 
\begin{enumerate}
\item a complex torus $M=\C^2/\Gamma$, where $\Gamma$ is a lattice in $\C^2$,  or 
\item a Hopf surface $M$,  
with universal covering space $\C^2\setminus \{(0,0)\}$. Its fundamental
group is isomorphic to $\Z\oplus \Z/l\Z$ , for some integer $l\geq 1$; it is generated by
a diagonal automorphism $(x, y) \mapsto (ax, ay)$ with $|a| < 1$, and a diagonal
automorphism $(x, y) \mapsto (\varepsilon x,\varepsilon y)$ where $\varepsilon$ is a primitive 
$l$-th root of $1$.  
\end{enumerate}

Moreover, $\Po$ is the only regular pencil on $M$, 
and it is generated by $\{dx+tdy\}_{t\in\Proj^1}$ on $\C^2$ (in the first case) or on 
$\C^2\setminus \{(0,0)\}$ (in the second case). 
\end{THM}

A regular $k$-web $\W = \Fo_1\boxtimes \cdots\boxtimes \Fo_l$ on $M$ 
is given locally by $k$ regular foliations 
$\Fo_i$ in general position, that is, pairwise transversal (see \cite{Pereira}  or \cite{Robert}). When $M$ is compact and $k\geq 3$, 
we can classify all the regular k-webs on it. 

\begin{THM}\label{T4} 
If $M$ is a compact complex surface and $\W$ is a regular $k$-web on $M$, $k\geq 3$,   
then $M$ is either 
\begin{enumerate} 
\item a complex torus $\C^2/\Gamma$, where $\Gamma$ is a lattice in $\C^2$, or 
\item a hyperelliptic surface $(E\times F)/G$, where $E$ and $F$ are elliptic curves, and 
$G$ is a finite abelian subgroup of $E$ (acting on $E$ by translations), or
\item a Hopf surface $M$, with universal covering space $\C^2\setminus \{(0,0)\}$. Its fundamental
group is isomorphic to $\Z\oplus \Z/l\Z$ , for some integer $l\geq 1$; it is generated by
a diagonal automorphism $(x, y) \mapsto (ax, \xi ay)$ with $|a| < 1$ and $\xi$ is primitive 
$j$-th root of $1$, for some integer $j\geq 1$, and a diagonal
automorphism $(x, y) \mapsto (\varepsilon_1 x,\varepsilon_2 y)$ where $\varepsilon_1, \varepsilon_2$ 
are primitive $l$-th roots of $1$. 

\end{enumerate}
Moreover, up to a finite cover, the $k$-web $\W$ is contained $\Po$, where $\Po$ 
denotes the only pencil on $M$ from Theorem \ref{T3}.    
\end{THM}

In Section \ref{section2} we introduce the notion of a Riccati connection on a complex manifold of dimension 
$n\geq2$. We also define the trace and the curvature of a Riccati connection. The main result in this section 
guarantees that the existence of a Riccati connection on a compact K\"ahler manifold yields a relationship between its Chern classes (see Theorem \ref{Riccaticlasses} and Proposition \ref{restricsurfaces}). 
We shall see that in the case of dimension 2, the K\"ahler hypothesis is not necessary. 

In Section \ref{section3} given a complex surface $M$ we establish a natural equiva\-lence between 
reduced Riccati connections on $M$ and Riccati distributions on $\Proj(TM)$. We also introduce 
the notion of the curvature of a Riccati distribution and show that if $M$ 
is compact, then a Riccati distribution is parallelizable, that is, it has zero curvature. 
Theorem \ref{T1} and Theorem \ref{T2} are proved in this section. 
By using the geometric des\-cription of all the affine compact complex surfaces  
(see the computations in \cite[Th\'eor\`eme 1.2]{Klingler} with a fixed compact complex 
surface $M$, where the set of complex affine structures on $M$ compatible 
with its analytical structure are determined), we were able to calculate the monodromy of the 
Riccati foliations on $\Proj(TM)$.

In Section \ref{section4} we apply the results of Section \ref{section3} 
to prove Theorem \ref{T3} and Theorem \ref{T4}. 

\section{Riccati connections}\label{section2}
The following definition was motivated by R. Molzon and K. Pinney Mortensen  
\cite{Molzon}.  
Let $M$ be a complex manifold of dimension $n\geq2$. A {\it Riccati connection} on M is a 
$\C$-bilinear map \[ \DR:TM\times TM \rightarrow TM,\]
satisfying 
\begin{enumerate} 
\item $\DR_{fX}Y=f\DR_{X}Y-\frac{1}{n}X(f)Y$, 
\item $\DR_{X}fY=f\DR_{X}Y+X(f)Y$,  
\end{enumerate} 
for any local holomorphic functions $f$ and any local vector fields $X,Y$ . 

In coordinates $(x_1,\ldots,x_n)\in U\subset \C^n$, a trivialization of $TU$ is given by the basis 
$(\partial_{x_1},\ldots,\partial_{x_n})$ and the Riccati connection is given by
\[\DR_X(Y)=d(Y)+\theta Y-\frac{1}{n} \dive (X)Y, 
\]
where $\theta$ is the {\it matrix of 1-forms associated with the Riccati connection $\DR$} and $\dive$ represents the divergence operator.  

Letting $(\varphi_{\alpha} ,U_{\alpha})$, $(\varphi_{\beta},U_{\beta})$ be two local systems of 
coordinates on $M$ with $U_{\alpha}\cap U_{\beta}\neq \emptyset$, and  denoting the corresponding 
change of coordinates by $\varphi_{\alpha\beta}=\varphi_{\alpha}\circ \varphi_{\beta}^{-1}$, we have 
\begin{equation}\label{rc} 
\theta_{\alpha}=g_{\alpha\beta}\theta_{\beta}g_{\alpha\beta}^{-1}
-dg_{\alpha\beta}g_{\alpha\beta}^{-1}+\frac{1}{n}\Tr(dg_{\alpha\beta}g_{\alpha\beta}^{-1})I
\end{equation}
where $g_{\alpha\beta}$ represents the Jacobian matrix of $\varphi_{\alpha\beta}$ and $\Tr$ the trace 
ope\-rator. 

Note that the trace of $\theta$ represents a 1-form on $M$ due to equation (\ref{rc}).

Now we will describe two invariants defined from a Riccati connection.   

\subsection{Trace  and curvature of a Riccati connection}

\begin{defi}
{\rm   The {\it trace} of a Riccati connection $\DR$, denoted by 
$\Tr( \DR)$, is the 1-form defined as the trace of a matrix of 1-forms on $M$
associated with $\DR$.
We say that $ \DR$ is a {\it reduced 
Riccati connection} if its trace is zero, i.e., $ \Tr( \DR)=0$.    
}
\end{defi}

\begin{obs}
{\rm Every Riccati connection $\DR$ on $M$ determines a 
reduced Riccati connection $\tilde{\DR}$  on $M$ as follows
\[
\tilde {\DR}_X Y=\DR_X Y-\frac{\Tr(\DR)(X)}{n}Y. 
\]
Two Riccati connections $\DR$ and $\hat\DR$ on $M$ determine the same 
reduced Riccati connection if and only if %they have the same trace if and only if
there is a 1-form $\gamma$ on $M$ such that $\hat\DR_X Y-\DR_X Y=\gamma(X)Y$.
}
\end{obs}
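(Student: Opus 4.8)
Proof proposal for the Remark (the final statement is the two-part assertion in the \obs environment).

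The plan is to verify directly that $\tilde{\DR}$ as defined satisfies axioms (1) and (2) of a Riccati connection and that its trace vanishes, then to characterize when two Riccati connections yield the same reduced connection. For the first part, I would write $\tilde{\DR}_X Y = \DR_X Y - \frac{1}{n}\Tr(\DR)(X)\, Y$ and check bilinearity, which is immediate since $\Tr(\DR)$ is a $1$-form on $M$ (this was noted just after equation~(\ref{rc})) and hence $\C$-linear in $X$, while the correction term is manifestly $\C$-linear in $Y$. For axiom (1), I compute
\[
\tilde{\DR}_{fX}Y = \DR_{fX}Y - \tfrac{1}{n}\Tr(\DR)(fX)\,Y
= f\DR_X Y - \tfrac{1}{n}X(f)Y - \tfrac{f}{n}\Tr(\DR)(X)\,Y
= f\tilde{\DR}_X Y - \tfrac{1}{n}X(f)Y,
\]
using that $\Tr(\DR)$ is $\mathcal{O}_M$-linear. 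For axiom (2), since the correction term $-\frac{1}{n}\Tr(\DR)(X)\,Y$ is $\mathcal{O}_M$-linear in $Y$, it contributes no derivative of $f$, so $\tilde{\DR}_X(fY) = \DR_X(fY) - \frac{f}{n}\Tr(\DR)(X)\,Y = f\tilde{\DR}_X Y + X(f)Y$. Thus $\tilde{\DR}$ is again a Riccati connection.

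To see that $\tilde{\DR}$ is reduced, I work in a coordinate chart: if $\theta$ is the matrix of $1$-forms attached to $\DR$, then subtracting $\frac{1}{n}\Tr(\DR)(X)Y$ replaces $\theta$ by $\theta - \frac{1}{n}\Tr(\theta)\,I$, whose trace is $\Tr(\theta) - \frac{n}{n}\Tr(\theta) = 0$. Hence $\Tr(\tilde{\DR}) = 0$. (One should note in passing that $\theta - \frac{1}{n}\Tr(\theta)I$ transforms correctly under~(\ref{rc}), which is automatic since the trace of the $1$-form $\theta$ transforms as a genuine $1$-form; but this is really just a consistency check, not a separate step.)

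For the characterization, suppose $\DR$ and $\hat{\DR}$ determine the same reduced connection, i.e.\ $\tilde{\DR} = \tilde{\hat{\DR}}$. Then for all local $X, Y$,
\[
\hat{\DR}_X Y - \DR_X Y = \tfrac{1}{n}\bigl(\Tr(\hat{\DR})(X) - \Tr(\DR)(X)\bigr)Y,
\]
so setting $\gamma = \frac{1}{n}\bigl(\Tr(\hat{\DR}) - \Tr(\DR)\bigr)$, which is a $1$-form on $M$, gives $\hat{\DR}_X Y - \DR_X Y = \gamma(X)Y$. Conversely, if $\hat{\DR}_X Y - \DR_X Y = \gamma(X)Y$ for some $1$-form $\gamma$, then one checks $\hat{\DR}_{fX}Y - \DR_{fX}Y = f\gamma(X)Y$ and $\hat{\DR}_X(fY) - \DR_X(fY) = f\gamma(X)Y$, confirming the difference is $\mathcal{O}_M$-bilinear and tensorial of the stated shape; taking traces in a chart shows $\Tr(\hat{\DR}) = \Tr(\DR) + n\gamma$, and then
\[
\tilde{\hat{\DR}}_X Y - \tilde{\DR}_X Y = \gamma(X)Y - \tfrac{1}{n}\bigl(n\gamma(X)\bigr)Y = 0,
\]
so $\tilde{\hat{\DR}} = \tilde{\DR}$. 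I do not expect any genuine obstacle here: the only point requiring a moment's care is confirming that $\Tr(\DR)$ is a well-defined global $1$-form (so that the correction term is coordinate-independent), and that is exactly the content of the remark following~(\ref{rc}), which I am entitled to use.
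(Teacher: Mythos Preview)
Your proposal is correct. The paper states this observation without proof, treating it as an immediate consequence of the definitions; your direct verification of axioms (1) and (2) for $\tilde{\DR}$, the local trace computation $\theta \mapsto \theta - \tfrac{1}{n}\Tr(\theta)I$, and the two-direction argument for the characterization are precisely the routine checks the paper leaves implicit.
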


We introduce the notion of curvature of a Riccati connection follo\-wing ideas similar 
to those of Kato \cite{Kato}.  

Using the same notation as in (\ref{rc}), we define  
\begin{equation}\label{equationcurvatureRiccati}
W_{\alpha}:=d\theta_{\alpha}+\theta_{\alpha}\wedge \theta_{\alpha}.
\end{equation}

The matrices of 2-forms $W_{\alpha}$ is called the {\it curvature} of the Riccati connection $\DR$. Using equation (\ref{rc}) we can verify that 
$W_{\alpha}=g_{\alpha \beta}W_{\beta}g^{-1}_{\alpha \beta}$, i.e., $W\in H^0(M,\Lambda^2T^*M\otimes \End(TM))$.

Let $t$ be an indeterminate and $A$ an $n\times n$ matrix. Define the elementary 
symmetric polynomials $C_k$ by 
 
 \[ 
\det (tI+A)=\sum_{k=0}^{n}C_{n-k}(A)t^k 
\]

We put $R_k(M)=C_k(\frac{i}{2\pi}W_{\alpha})$, for $k=0,1,\ldots,n$. 

\begin{teo}\label{Riccaticlasses} 
Let $M$ be a complex manifold of dimension $n\geq 2$. If $M$ admits a Riccati connection, 
then 
\[ 
c_k(M)=\sum_{j=0}^{k} \binom{n-j}{k-j} R_j(M)\left(\frac{c_1(M)}{n}\right)^{k-j},
\] 
where $c_k(M)$ is the $k$-th Chern form.  
\end{teo}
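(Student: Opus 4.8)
The plan is to exhibit a Riccati connection as an honest linear connection on $TM$ that has been twisted by a (fractional power of the) line bundle $\det TM=\Lambda^{n}TM$, and then to run Chern--Weil theory together with the classical behaviour of Chern classes under such twists. Concretely, comparing the transformation rule (\ref{rc}) with the one for a genuine linear connection on $TM$, namely $\Gamma_\alpha=g_{\alpha\beta}\Gamma_\beta g_{\alpha\beta}^{-1}-dg_{\alpha\beta}g_{\alpha\beta}^{-1}$, one sees that $\theta_\alpha$ transforms correctly except for the extra scalar term $\frac1n\Tr(dg_{\alpha\beta}g_{\alpha\beta}^{-1})I$, and this term is exactly what a connection on the line bundle $\det TM$ contributes. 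So I would fix any $C^{\infty}$ connection on $\det TM$, given in the frame dual to $(\partial_{x_1},\dots,\partial_{x_n})$ by $1$-forms $a_\alpha$ with $a_\alpha=a_\beta-\Tr(dg_{\alpha\beta}g_{\alpha\beta}^{-1})$, and set
\[
\Gamma_\alpha:=\theta_\alpha+\tfrac1n\,a_\alpha I .
\]
Using (\ref{rc}) and the fact that $I$ is central, a one-line check gives $\Gamma_\alpha=g_{\alpha\beta}\Gamma_\beta g_{\alpha\beta}^{-1}-dg_{\alpha\beta}g_{\alpha\beta}^{-1}$, so $\{\Gamma_\alpha\}$ is a $C^{\infty}$ connection on $TM$, whose Chern forms are by definition the $c_k(M)$ appearing in the statement.

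Next I would compute its curvature $K_\alpha=d\Gamma_\alpha+\Gamma_\alpha\wedge\Gamma_\alpha$ in terms of the Riccati curvature $W_\alpha$ of (\ref{equationcurvatureRiccati}). Since $a_\alpha$ is a \emph{scalar} $1$-form one has $a_\alpha\wedge a_\alpha=0$ and $a_\alpha\wedge\theta_\alpha+\theta_\alpha\wedge a_\alpha=0$ (wedge of $1$-forms anticommutes and scalars commute with matrices), so every cross term drops out and
\[
K_\alpha=W_\alpha+\tfrac1n\,(da_\alpha)\,I .
\]
Moreover $da_\alpha$ is independent of $\alpha$, because $\Tr(dg_{\alpha\beta}g_{\alpha\beta}^{-1})=d\log\det g_{\alpha\beta}$ is closed; it is the curvature $2$-form of the chosen connection on $\det TM$, and $\frac{i}{2\pi}da_\alpha$ represents $c_1(\det TM)=c_1(M)$.

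It then remains to feed $\frac{i}{2\pi}K_\alpha=\frac{i}{2\pi}W_\alpha+\frac1n\bigl(\frac{i}{2\pi}da_\alpha\bigr)I$ into the elementary symmetric polynomials. The algebraic fact I need is that for an $n\times n$ matrix $A$ and a central element $c$ (here $c=\frac1n\cdot\frac{i}{2\pi}da_\alpha$, an even-degree form),
\[
C_k(A+cI)=\sum_{j=0}^{k}\binom{n-j}{k-j}C_j(A)\,c^{\,k-j},
\]
which one gets immediately by writing $\det\bigl(tI+(A+cI)\bigr)=\det\bigl((t+c)I+A\bigr)$, substituting $s=t+c$ in $\det(sI+A)=\sum_m C_{n-m}(A)s^m$, expanding $(t+c)^m$ by the binomial theorem, collecting the coefficient of $t^k$, and using $\binom{n-j}{\,n-k\,}=\binom{n-j}{\,k-j\,}$. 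Applying this with $A=\frac{i}{2\pi}W_\alpha$ yields $c_k(M)=C_k\bigl(\frac{i}{2\pi}K_\alpha\bigr)=\sum_{j=0}^{k}\binom{n-j}{k-j}R_j(M)\bigl(\frac{c_1(M)}{n}\bigr)^{k-j}$, as claimed. The conceptual step --- recognising the $\det TM$-twist --- is the crux; everything after it is bookkeeping. The one point that deserves care is the precise reading of the identity: it is an equality of Chern forms for the connection $\Gamma$ just constructed (with $c_1(M)$ the first Chern form of that same connection), hence an equality of de Rham classes independent of all choices. The $R_1$-term on the right is genuinely there because $\Tr\theta_\alpha$ --- and with it the first Chern form attached to the Riccati connection --- need not vanish; it does vanish precisely when $\DR$ is reduced, i.e. $\Tr W_\alpha=d(\Tr\theta_\alpha)=0$, so the reader may, if desired, first replace $\DR$ by its associated reduced connection and run the argument there.
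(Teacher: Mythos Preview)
Your argument is correct and is essentially the paper's own proof: choose a smooth connection $\eta_\alpha$ on $\det TM$, set $\Theta_\alpha=\theta_\alpha+\tfrac{1}{n}\eta_\alpha I$ to obtain a genuine connection on $TM$ with curvature $W_\alpha+\tfrac{1}{n}(d\eta_\alpha)I$, and then expand $\det\bigl((t+\tfrac{c_1}{n})I+\tfrac{i}{2\pi}W_\alpha\bigr)$ via the binomial theorem and compare coefficients. Your closing remark on $R_1$ is slightly off, however: in the paper ``reduced'' means $\Tr\theta_\alpha=0$ (not $\Tr W_\alpha=0$), and since $\Tr\theta$ is a global $1$-form on $M$, the form $R_1(M)=\tfrac{i}{2\pi}\,d(\Tr\theta)$ is always exact and hence vanishes in cohomology for every Riccati connection, reduced or not.
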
 
\begin{proof}  
We know that there exists a Riccati connection $\DR$, for which equation (\ref{rc}) holds.
Let $\con^0$ be a (smooth) connection on $\det(TM)$. That is, there exist (1,0)-forms 
$\eta_{\alpha}$ on $U_{\alpha}$ satisfying 
\begin{equation}\label{class2} 
\eta_{\beta}-\eta_{\alpha}=\Tr(dg_{\alpha\beta}g_{\alpha\beta}^{-1}).
\end{equation}     
Note that the connection $\con^0$ has curvature form $K_{\con^0}^{\alpha}=d\eta_{\alpha}$ and 
$c_1(M)=c_1(\det TM)=\frac{i}{2\pi}d\eta_{\alpha}$. 

Define a matrix-valued smooth (1,0)-form $\Theta_{\alpha}$ on $U_{\alpha}$ by 
\[ 
\Theta_{\alpha}=\theta_{\alpha}+\frac{1}{n}\eta_{\alpha},      
 \]     
 Using equations (\ref{rc}) and  (\ref{class2}) we have   
 \[ 
\Theta_{\alpha}=g_{\alpha\beta}\Theta_{\beta}g_{\alpha\beta}^{-1}
-dg_{\alpha\beta}g_{\alpha\beta}^{-1}
\] 
This shows that $\{\Theta_{\alpha}\}_{\alpha}$ is a (smooth) connection $\con$ 
on $TM$. Let $K_{\con}^{\alpha}$ be the form curvature of $\con$. We can see that 
 \[ 
K_{\con}^{\alpha}=W_{\alpha}+\frac{1}{n}K_{\con^0}^{\alpha}.
 \] 
 Set $A=\frac{i}{2\pi}K_{\con}^{\alpha}$, $B=\frac{i}{2\pi}W^{\alpha}$ and 
 $ \eta=\frac{i}{2\pi}K_{\con^0}^{\alpha}$. Then 
 
 \begin{align*}
\det(tI+A)&=\sum_{j=0}^n C_{n-j}(A)t^j=\sum_{j=0}^n C_{n-j}(M)t^j,
\end{align*}  
The other side we can rewrite  
 \begin{align*}
\det(tI+A)&=\det(tI+B+\frac{1}{n} \eta I)=\det \left((t+\frac{1}{n}\eta)I+B \right),\\
&=\sum_{l=0}^n C_{n-l}(B)(t+ \frac{1}{n} \eta)^l=\sum_{l=0}^n R_{n-l}(M)(t+ \frac{1}{n}c_1(M))^l,\\ 
&=\sum_{l=0}^n R_{n-l}(M)\sum_{s=0}^{l}\binom{l}{s}t^s \left(\frac{c_1(M)}{n}\right)^{l-s}, \\ 
&=\sum_{j=0}^n  \left(\sum_{l=j}^{n}\binom{l}{j}R_{n-l}(M)
\left(\frac{c_1(M)}{n}\right)^{l-j}\right)t^j. 
\end{align*}  

Comparing the terms of $t^j$ of these two equalities, we get 

 \[ 
C_{n-j}=\sum_{l=j}^{n}\binom{l}{j}R_{n-l}(M)\left(\frac{c_1(M)}{n}\right)^{l-j},  \quad 
 j=0, \ldots,n
 \] 
Exchanging the indices $n-j$ and $k$ we complete the proof of the theorem.
\end{proof} 

Using Theorem \ref{Riccaticlasses} we have $R_0(M)=1$, $R_1(M)=0$,
 \begin{align*}  
R_2(M)&=c_2(M)- \frac{(n-1)}{2n}c_1^2(M),\,\ldots
%R_3(M)&=c_3(M)- \frac{(n-2)}{n}c_2(M)c_1(M)+\frac{(n-1)(n-2)}{3n^2}c_1^3(M),
\end{align*}  
Thus we obtain the following corollary. 

\begin{cor}\label{} 
The $R_k(M)$ forms are d-closed. The de Rham cohomology classes of the $R_k(M)$ forms 
are real cohomology classes and independent of the choice of Riccati connections.     
\end{cor}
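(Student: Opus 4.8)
The plan is to read off everything from the identity already established in the proof of Theorem~\ref{Riccaticlasses}. Recall that we produced a \emph{smooth} connection $\con$ on $TM$ whose curvature satisfies $K_{\con}^{\alpha}=W_{\alpha}+\frac1n K_{\con^0}^{\alpha}$, and that $C_{n-j}(\tfrac{i}{2\pi}K_{\con}^{\alpha})=c_{n-j}(M)$ is a closed form representing the $(n-j)$-th Chern class. The first observation is that each $R_k(M)=C_k(\tfrac{i}{2\pi}W_{\alpha})$ is $d$-closed: this follows from Chern--Weil applied to the connection $\con$ together with the polynomial identity of the theorem. Indeed, rearranging
\[
c_k(M)=\sum_{j=0}^{k}\binom{n-j}{k-j}R_j(M)\left(\frac{c_1(M)}{n}\right)^{k-j}
\]
and inducting on $k$ (the base cases $R_0=1$, $R_1=0$ being explicit), one expresses $R_k(M)$ as a universal polynomial in the $c_j(M)$ and $c_1(M)$; since the Chern forms $c_j(M)$ are $d$-closed and the wedge product of closed forms is closed, $dR_k(M)=0$. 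Alternatively, and perhaps more cleanly, one notes directly that $R_k(M)=C_k(\tfrac{i}{2\pi}W_\alpha)$ where $W_\alpha$ transforms as a section of $\Lambda^2 T^*M\otimes\End(TM)$ and $W_\alpha = K_{\con}^\alpha-\frac1n K_{\con^0}^\alpha\,I$ is an honest (globally defined, via the conjugation-invariance of $C_k$) curvature-type form built from the two connections $\con$ and $\con^0$; the Bianchi identity for $\con$ and $\con^0$ then forces $C_k$ of it to be closed by the standard Chern--Weil computation.

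Next, the classes are real. This is immediate once $R_k(M)$ is expressed as a polynomial in the $c_j(M)$ with rational coefficients (the binomial coefficients divided by powers of $n$), because each Chern class $c_j(M)\in H^{2j}(M;\R)$ is real; hence any $\Q$-polynomial in them lies in real cohomology. One may also see it directly: $C_k$ is a real polynomial and $\tfrac{i}{2\pi}W_\alpha$ has the same reality property as a curvature form, so $R_k(M)$ is a real differential form and therefore defines a real de Rham class.

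Finally, independence of the choice of Riccati connection. Suppose $\DR$ and $\DR'$ are two Riccati connections on $M$, giving matrices of $1$-forms $\theta_\alpha,\theta'_\alpha$ and curvatures $W_\alpha,W'_\alpha$. Carrying out the construction of the proof of Theorem~\ref{Riccaticlasses} for each, with the \emph{same} auxiliary smooth connection $\con^0$ on $\det TM$, produces two smooth connections $\con,\con'$ on $TM$. Since the space of connections is affine, the segment $\con_t=(1-t)\con+t\con'$ is a family of smooth connections on $TM$, and Chern--Weil's transgression argument shows $C_k(\tfrac{i}{2\pi}K_{\con}^\alpha)$ and $C_k(\tfrac{i}{2\pi}K_{\con'}^\alpha)$ are cohomologous; but these are just $c_k(M)$ either way, which tells us nothing new. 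The useful statement instead comes from the theorem's formula read backwards: $[R_k(M)]$ is determined, via the universal polynomial inversion, by $[c_j(M)]$ and $[c_1(M)]$ alone, none of which depend on the Riccati connection. Thus $[R_k(M)]$ is independent of $\DR$. The main (very mild) obstacle is simply to make the polynomial inversion precise --- i.e.\ to check that the lower-triangular system expressing $c_k$ in terms of $R_0,\dots,R_k$ (with $R_0=1$) is invertible over $\Q$ --- which is clear since the diagonal coefficient is $\binom{n-k}{0}=1$.
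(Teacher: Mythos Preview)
Your main line of argument---inverting the lower-triangular system of Theorem~\ref{Riccaticlasses} (diagonal coefficient $\binom{n-k}{0}=1$) to write each $R_k(M)$ as a rational polynomial in the Chern forms $c_j(M)$, and then reading off $d$-closedness, reality of the class, and independence of $\DR$ from the corresponding properties of the Chern classes---is correct and is exactly the paper's (implicit) proof: the paper simply records $R_0=1$, $R_1=0$, $R_2=c_2-\tfrac{n-1}{2n}c_1^2,\ldots$ and then states the corollary.

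However, your alternative ``direct'' argument for reality is wrong and should be dropped. The curvature $W_\alpha=d\theta_\alpha+\theta_\alpha\wedge\theta_\alpha$ is built from \emph{holomorphic} $1$-forms, so $R_k(M)=C_k(\tfrac{i}{2\pi}W_\alpha)$ is a holomorphic $2k$-form of type $(2k,0)$---the paper uses precisely this fact in the proof of Proposition~\ref{restricsurfaces}. The matrix $\tfrac{i}{2\pi}W_\alpha$ does \emph{not} have the reality property of a Hermitian curvature (which is of type $(1,1)$), and $R_k(M)$ is not a real differential form in general. Only its de~Rham \emph{class} is real, and establishing this genuinely requires the detour through the Chern classes. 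Your alternative for closedness via a ``Bianchi identity for $\con$ and $\con^0$'' is also shaky as stated: $W_\alpha$ is not the curvature of a single linear connection, and the standard Chern--Weil transgression does not apply to $C_k$ of an arbitrary $\End(TM)$-valued $2$-form. Stick with the inversion argument throughout.
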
  

\begin{prop}\label{restricsurfaces}
Let $M$ be a compact complex manifold of dimension $n\geq 2$ admitting a Riccati connection. 
Then  
\[
R_k(M)=0, \mbox{ for $k\geq n/2$.}  
\] 
If $M$ is a complex surface, then 
\[
4c_2(M)=c_1^2(M). 
\] 
If $M$ is K\"ahler, 
then the classes $R_k(M)$, $k\geq1$,  are zero and
\[ 
c_k(M)=n^{-k}\binom{n}{k} c_1^k(M). 
\] 
\end{prop}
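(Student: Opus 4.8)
The plan is to exploit Theorem \ref{Riccaticlasses} together with the fact, recorded just before the statement, that $R_0(M)=1$ and $R_1(M)=0$, and the basic observation from the preceding corollary that each $R_k(M)$ is a $d$-closed real de Rham class which is independent of the Riccati connection chosen. The first step is to prove the vanishing $R_k(M)=0$ for $k\ge n/2$. Here I would argue via the curvature matrix $W_\alpha$ of the Riccati connection: since $W_\alpha$ is a global section of $\Lambda^2 T^*M\otimes \End(TM)$ and the $R_k$ are built from the elementary symmetric functions $C_k$ of $\frac{i}{2\pi}W_\alpha$, the class $R_k(M)$ lives in $H^{2k}_{dR}(M;\R)$ but is represented by a $2k$-form whose "$\End$-part'' is constrained. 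The cleanest route is: the matrix of $1$-forms $\theta_\alpha$ associated with a \emph{reduced} Riccati connection (which exists in the compact case by Theorem \ref{T2}, or one can pass to $\tilde\DR$) has trace zero, so $\Tr(W_\alpha)=\Tr(d\theta_\alpha+\theta_\alpha\wedge\theta_\alpha)$ is exact; more importantly, one shows that the $(2,0)$-component and higher of $W_\alpha$ must vanish for dimension reasons on a surface, or — in general — that the $R_k$ are forced to be of type $(k,k)$ and at the same time represented by forms that cannot have full rank. I expect the slick argument here to mirror the classical fact that for a projective (or affine normal) connection the higher Chern-type classes of the associated bundle vanish: compute $R_k(M)$ using the connection $\con=\{\Theta_\alpha\}$ built in the proof of Theorem \ref{Riccaticlasses}, whose curvature is $W_\alpha+\frac1n K_{\con^0}^\alpha$, and observe that $W_\alpha$ — being the curvature of a \emph{trace-free} object globally defined on $\Proj(TM)$ as a Riccati distribution — has a representative forcing $C_k(\tfrac{i}{2\pi}W_\alpha)=0$ for $2k>n$.

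Granting $R_k(M)=0$ for $k\ge n/2$, the surface case $n=2$ is immediate: then $k=1$ already satisfies $k\ge n/2$, so $R_1(M)=0$ (as we knew) and the condition "$k\ge n/2$'' for $k=2$ is vacuous only if $2\ge 1$, which holds, hence also $R_2(M)=0$. But from the displayed formula $R_2(M)=c_2(M)-\frac{n-1}{2n}c_1^2(M)$ with $n=2$ we get $R_2(M)=c_2(M)-\frac14 c_1^2(M)$, so $R_2(M)=0$ gives exactly $4c_2(M)=c_1^2(M)$. I would write this out cleanly, being careful that the identity is one of real cohomology classes (equivalently of numbers, after integrating over the compact surface).

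For the Kähler case, the key additional input is Hodge theory: on a compact Kähler manifold the real cohomology class $R_k(M)$, which by the corollary is represented by a $d$-closed real form and which (being a polynomial in the curvature of a complex connection) is of type $(k,k)$, is \emph{zero as soon as it is known to vanish in some bidegree strip} — more precisely, I would use that $R_k(M)=0$ for $k\ge n/2$ from the first part, combined with a multiplicativity/Newton's-identity argument: the relations $R_0=1$, $R_1=0$, and $R_k=0$ for $k\ge n/2$, fed into the generating identity $\det(tI+\tfrac{i}{2\pi}W)=\sum R_{n-l}(M)(\cdots)$ from the proof of Theorem \ref{Riccaticlasses}, force \emph{all} $R_k(M)=0$ for $k\ge 1$ once one knows the total class $\sum_k R_k(M)$ is a \emph{square-free–type} expression; the honest way is: the power sums $p_k$ of the eigenvalues of $\tfrac{i}{2\pi}W$ vanish for $k\ge n/2$ in cohomology, and on a Kähler manifold a standard argument (e.g. the one for Bogomolov–type inequalities, or simply that $W$ defines a flat structure after the correspondence of Theorem \ref{T2}) upgrades the vanishing of the top half of the $R_k$ to vanishing of all of them. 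Then Theorem \ref{Riccaticlasses} collapses to $c_k(M)=\binom{n}{k}R_0(M)\big(\tfrac{c_1(M)}{n}\big)^k=n^{-k}\binom{n}{k}c_1^k(M)$, which is the claimed formula.

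The main obstacle, I expect, is the vanishing $R_k(M)=0$ for $k\ge n/2$ in the non-Kähler generality: without Hodge theory one cannot argue by bidegree, so one must extract this purely from the structure of the curvature $W_\alpha$ of a Riccati connection — presumably by showing, as in Kato's work \cite{Kato} that motivated the definition, that $W_\alpha$ takes values in a subbundle (the trace-free endomorphisms, or a rank-constrained locus coming from the Riccati/$\mathrm{PGL}$ nature of the distribution on $\Proj(TM)$) small enough that its top symmetric functions are forced to vanish on a \emph{compact} $M$ by a positivity or integrality argument. Once that structural lemma is in place, everything else is bookkeeping with the identity from Theorem \ref{Riccaticlasses}.
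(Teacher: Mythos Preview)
Your proposal has a genuine gap: you never identify the single observation that drives the entire proof, namely that the curvature $W_\alpha=d\theta_\alpha+\theta_\alpha\wedge\theta_\alpha$ is a matrix of \emph{holomorphic} $2$-forms (because $\theta_\alpha$ is a matrix of holomorphic $1$-forms). Consequently $R_k(M)=C_k\bigl(\tfrac{i}{2\pi}W_\alpha\bigr)$ is a holomorphic $(2k,0)$-form on $M$. This immediately gives $R_k(M)=0$ for $2k>n$ by dimension, with no need for any ``rank-constrained locus'', positivity, or integrality argument. You gesture at ``$(2,0)$-components'' and ``representatives forcing $C_k=0$'' but never pin down holomorphicity, which is why your first paragraph circles without landing. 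The boundary case $2k=n$, which you do not address at all, is handled in the paper by the standard trick: setting $\gamma=R_k(M)$, its de Rham class is real (by the preceding corollary), so $[\gamma]=[\overline{\gamma}]$, whence $\int_M\gamma\wedge\overline{\gamma}=\int_M\gamma\wedge\gamma=0$ (the latter being a $(2n,0)$-form), which forces $\gamma=0$.

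Your K\"ahler paragraph also misses the point. The argument is not a Newton's-identity bootstrap from the vanishing in the top half; it is a direct Hodge-theoretic clash of bidegrees. By induction on $k$ in the formula of Theorem~\ref{Riccaticlasses}, the de Rham class of $R_k(M)$ is represented by a real $(k,k)$-form (since each $c_j(M)$ is). But $R_k(M)$ itself is a holomorphic $(2k,0)$-form, hence on a compact K\"ahler manifold its class lies in $H^{2k,0}$. A nonzero class cannot lie in both $H^{2k,0}$ and $H^{k,k}$ for $k\ge 1$, so $R_k(M)=0$. Your final derivation of $c_k(M)=n^{-k}\binom{n}{k}c_1^k(M)$ from this vanishing is correct, as is your surface paragraph once $R_2(M)=0$ is known.
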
 
\begin{proof} Since $R_{k}(M)$ is a holomorphic $2k$-form,  
if $2k>n$ then $R_{k}(M)=0$. 

Since every $c_k(M)$ is represented by a real $(k,k)$-form, using induction we  
can show that $R_k(M)$ is represented by a real $(k,k)$-form. 

For $k=2n$, we put $\gamma=R_k(M)$ is represented by a real $(k,k)$-form $\eta$.       
Since $\eta$ is real and cohomologous to $\gamma$, it is cohomologous to 
$\overline{\gamma}$. Hence $\gamma$ and $\overline{\gamma}$ are cohomologous to each  
other. Then 
\[
\int_M \gamma\wedge \overline{\gamma}=\int_M \gamma\wedge \gamma=0.  
\] 
Thus, we get $\gamma=0$.

If $M$ is K\"ahler, then $R_k(M)=0$, since $R_k(M)$ is represented by a real $(k,k)$-form. 

\end{proof} 

As an immediate consequence of the proposition above and \cite[(2.2) Corollary]{Kob1}, we have: 
\begin{cor}
A compact K\"ahler manifold $M$ with $c_1(M)=0$ admits a Riccati connection if and 
only if it is covered by a complex torus. 
\end{cor}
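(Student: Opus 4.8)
The plan is to prove the two implications separately. The forward direction ("admits a Riccati connection" $\Rightarrow$ "covered by a torus") will be an immediate application of the K\"ahler part of Proposition~\ref{restricsurfaces} followed by a citation of \cite[(2.2) Corollary]{Kob1}; the converse will be a one-line explicit construction on $\C^{n}$ that descends to $M$.

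\textbf{"Only if".} Assume $M$ is compact K\"ahler of dimension $n\geq 2$, that $c_{1}(M)=0$, and that $M$ carries a Riccati connection. By the last part of Proposition~\ref{restricsurfaces} the classes $R_{k}(M)$ vanish for all $k\geq 1$ and
\[
c_{k}(M)=n^{-k}\binom{n}{k}c_{1}^{k}(M)\qquad(k\geq 1).
\]
Since $c_{1}(M)=0$, this forces $c_{k}(M)=0$ for every $k\geq 1$; in particular $c_{1}(M)=c_{2}(M)=0$ as real (de Rham) cohomology classes. Now I would invoke \cite[(2.2) Corollary]{Kob1}, which is precisely the statement that a compact K\"ahler manifold with vanishing real Chern classes (equivalently, with $c_{1}=0$ and $c_{2}\!\cdot[\omega]^{n-2}=0$) is covered by a complex torus — the mechanism being Yau's theorem, which produces a Ricci-flat K\"ahler metric from $c_{1}=0$, combined with the equality case of the Chern--Weil expression for $c_{2}$ against $\omega^{n-2}$, which forces the curvature to vanish when $c_{2}=0$, followed by Bieberbach's theorem on flat compact K\"ahler manifolds. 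This yields exactly the asserted conclusion.

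\textbf{"If".} Conversely, suppose $M$ is finitely and unramifiedly covered by a complex torus $\C^{n}/\Lambda$. Then the universal cover of $M$ is $\C^{n}$ and $M=\C^{n}/\Gamma$ for a group $\Gamma$ acting freely, properly discontinuously and cocompactly; since biholomorphisms of a complex torus are affine, passing to the Galois closure (i.e. replacing $\Lambda$ by its normal core in $\Gamma$, still a lattice of translations, hence still a torus) one sees that $\Gamma$ may be taken to act by affine transformations of $\C^{n}$. On $\C^{n}$ in standard coordinates take the flat choice $\theta\equiv 0$, i.e. set $\DR_{X}Y:=d(Y)-\tfrac1n\dive(X)\,Y$ with $d$ the flat connection; the two axioms of a Riccati connection follow at once from $\dive(fX)=f\,\dive(X)+X(f)$, and this $\DR$ is manifestly invariant under the affine group $\gl(n,\C)\ltimes\C^{n}$. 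Hence $\DR$ is $\Gamma$-invariant and descends to a holomorphic Riccati connection on $M$.

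\textbf{Expected main obstacle.} As the word "immediate" in the statement suggests, there is no substantial difficulty here; essentially everything is bookkeeping once Proposition~\ref{restricsurfaces} and \cite[(2.2) Corollary]{Kob1} are in hand. The only two points that deserve a line of care are: (i) translating "all Chern \emph{forms} of $M$ are cohomologically trivial" into the exact hypotheses used in \cite[(2.2) Corollary]{Kob1}, which is handled by the Yau/Bieberbach input recalled above; and (ii) checking that a Riccati connection, being a first-order differential operator rather than a tensor field, genuinely descends along $\C^{n}\to\C^{n}/\Gamma$ — which it does, thanks to the affine invariance already visible in the transformation rule~(\ref{rc}).
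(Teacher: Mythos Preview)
Your proof is correct and follows the paper's approach: the paper's entire argument is the one-line remark ``immediate consequence of the proposition above and \cite[(2.2) Corollary]{Kob1}'', and you have faithfully unpacked both directions of that citation. Your explicit ``if'' construction via $\theta\equiv 0$ on $\C^{n}$ together with the normal-core descent argument is a welcome expansion of what the paper leaves implicit.
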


%\section{Riccati connections and projective connections} 
%
%Let $M$ be a complex manifold of dimension $n$. 
\begin{obs}{\rm
In \cite{Molzon}, R. Molzon and K. Pinney Mortensen introduced the 
concept of a {\it projective connection} on M, which is a 
$\C$-bilinear map \[ \Pi:TM\times TM \rightarrow TM\]
satisfying 
\begin{enumerate} 
\item $\Pi_{fX}Y=f\Pi_{X}Y-\frac{1}{n+1}X(f)Y$, for $f \in \Ol(M)$,
\item $\Pi_{X}fY=f\Pi_{X}Y+X(f)Y-\frac{1}{n+1}Y(f)X$, for $f\in\Ol(M)$, 
\item $\Pi_{X}Y-\Pi_{Y}X=[X,Y]$.
\end{enumerate} 

%In coordinates $(x_1,\ldots,x_n)\in U\subset \C^n$, a trivialization of $TU$ is given by the basis 
%$(\partial_{x_1},\ldots,\partial_{x_n})$ and the projective connection is given by
%\[\Pi_X(Y)=d(Y)+\Theta Y-\frac{1}{n+1} \dive (X)Y-\frac{1}{n+1} \dive (Y)X, \quad 
%\]
%where $\Theta$ is the matrix of 1-forms associated with the projective connection. 
%
%It is worth mentioning that Kato \cite{Kato} omits property 3 from the definition and calls a
%projective connection normal if it satisfies property 3.

In the next section, we will see that a Riccati connection on a complex surface $M$ induces a projective connection on $M$.
}
\end{obs}

From now on, unless stated otherwise,  
$M$ will denote an arbitrary complex surface (i.e., not necessarily compact).
 
\section{Riccati distributions}\label{section3}

Consider the total space $S=\Proj(TM)$ of the $\Proj^1$-bundle
$\pi:\Proj(TM)\rightarrow M$. The three dimensional variety $S$ is called  
{\it the contact variety}. 

For each point $q = (p, [v])\in S$, i.e. $p\in M$ and $v\in T_p M$, one has the plane
$\Do_q := (d\pi (q))^{-1}(\C v)$. We obtain in this way a two dimensional distribution
$\Do$ on $S$, namely the so called {\it contact distribution}.

A codimension one holomorphic regular distribution $\Ho$ on $S$ is called a  
{\it Riccati distribution} if every fibre of $\pi$ is transverse of $\Ho$.    

In local coordinates $(x, y) : U\to \C^2$, vectors $z_1\partial x + z_2\partial y$ in the fiber of $TM$ are replaced by homogeneous coordinates $(z_1 : z_2) = (1 : z)$ with $z\in\Proj^1$ in $\Proj(TM)$. Therefore, the 
$\Proj^1$-bundle writes
\begin{align*}
\pi: \Proj^1\times U&\to U \\
((1:z), (x,y)) &\mapsto (x,y),
\end{align*}
over the set $U$, 
we know that $\Do$ is given by $dy-zdx$ and the contact structure $dy = zdx$, 
and hence the Riccati distribution $\Ho$ is given by a 1-form of the type:
\begin{equation}\label{formric} 
\omega=dz+\gamma+\delta z+\eta z^2,
\end{equation}
with $\gamma$, $\delta$, and $\eta$ 1-forms on U. 

The corresponding differential equation $dz = -\gamma-\delta z-\eta z^2$
is called the {\it Riccati equation}.

We write 
\[\theta=\left(\begin{array}{rr}
-\frac{\delta}{2} &-\eta\\ 
 \gamma &\frac{\delta}{2}
\end{array} \right).\]

This matrix of 1-form represents a reduced Riccati connection $\DR$. 
In fact, taking two local systems of coordinates $(\varphi_{\alpha} ,U_{\alpha})$ and $(\varphi_{\beta},U_{\beta})$ on $M$ with $U_{\alpha}\cap U_{\beta}\neq \emptyset$ and letting 
$\varphi_{\alpha\beta}=\varphi_{\alpha}\circ \varphi_{\beta}^{-1}$ denote the corresponding change of coordinates, we have
\begin{equation} \label{cocycleric} 
\omega_{\alpha}=h_{\alpha\beta} \omega_{\beta},  \quad h_{\alpha\beta}\in\Ol^*(\pi^{-1}U_{\alpha}
\cap \pi^{-1}U_{\beta}), 
\end{equation}
and $\Ho_{|_{\pi^{-1}U_{\alpha}}}=\{\omega_{\alpha}=0\}$.  
Using equation (\ref{cocycleric}) we get 
 
\begin{equation} \label{equationdistribution}
\theta_{\alpha}=g_{\alpha\beta}\theta_{\beta}g_{\alpha\beta}^{-1}
-dg_{\alpha\beta}g_{\alpha\beta}^{-1}+\frac{1}{2}\Tr(dg_{\alpha\beta}g_{\alpha\beta}^{-1})I. 
\end{equation} 
Furthermore, the following conditions are equivalent 
\begin{itemize}
\item $\DR$ has zero curvature: $\theta\wedge \theta+d\theta=0$; 
\item $\Ho$ is integrable (Frobenius): $\omega\wedge d\omega=0$.  
\end{itemize}
Therefore, we obtain the following. 

\begin{prop}\label{conectionsdistrbutions} 
We have a one-to-one correspondence between reduced 
Riccati connections on $M$ and 
Riccati distributions on $\Proj(TM)$. 

Furthermore, this correspondence induces  
a one-to-one correspondence between reduced 
Riccati connections with zero curvature on $M$ and   
Riccati foliations on $\Proj(TM)$, i.e. Frobenius integrable Riccati distributions. 

In particular, finding Riccati foliations on $\Proj(TM)$ is equivalent to finding 
$2\times 2$ holomorphic matrices of 1-forms $\theta_{\alpha}$ on $U_{\alpha}$ 
that satisfy the structure equations 
\begin{align}
d\theta_{\alpha}+\theta_{\alpha}\wedge \theta_{\alpha}&=0, \label{curvature}  \\
\Tr(\theta_{\alpha})&=0,\label{trace}
\end{align}
and whose changes of coordinates satisfy (\ref{equationdistribution}). 
\end{prop}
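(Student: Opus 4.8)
The plan is to build the correspondence chart by chart, using the local normal form already established, and then check that the two natural gluing data (the adjoint-type cocycle for the connection matrices and the scalar cocycle for the Riccati $1$-forms) match up. First I would start from a reduced Riccati connection $\DR$ on $M$, given in coordinate charts $(\varphi_\alpha,U_\alpha)$ by $2\times 2$ matrices of holomorphic $1$-forms $\theta_\alpha$ with $\Tr(\theta_\alpha)=0$ satisfying (\ref{equationdistribution}). Since $\theta_\alpha$ is traceless it is uniquely of the form
\[
\theta_\alpha=\begin{pmatrix}-\frac{\delta_\alpha}{2} & -\eta_\alpha\\ \gamma_\alpha & \frac{\delta_\alpha}{2}\end{pmatrix},
\]
with $\gamma_\alpha,\delta_\alpha,\eta_\alpha$ holomorphic $1$-forms on $U_\alpha$, and on $\pi^{-1}(U_\alpha)\cong\Proj^1\times U_\alpha$ I set $\omega_\alpha=dz+\gamma_\alpha+\delta_\alpha z+\eta_\alpha z^2$, where $z$ is the affine fibre coordinate induced by the trivialization of $TU_\alpha$. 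The local kernels $\{\omega_\alpha=0\}$ are regular (the $dz$-coefficient of $\omega_\alpha$ is the constant $1$, so $\omega_\alpha$ is nowhere zero) and transverse to the fibres (because $\omega_\alpha(\partial_z)=1\neq 0$), so once they glue they define a Riccati distribution.

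The glueing is where the real content lies. On overlaps the fibre coordinate changes by the M\"obius transformation associated with the Jacobian $g_{\alpha\beta}$ of $\varphi_{\alpha\beta}$ acting projectively on $\Proj(T_pM)$; substituting this into $\omega_\alpha$ and comparing with $\omega_\beta$ shows, after a direct computation, that (\ref{equationdistribution}) holds for the matrices $\theta_\alpha$ if and only if there is a nowhere-vanishing $h_{\alpha\beta}\in\Ol^*(\pi^{-1}U_\alpha\cap\pi^{-1}U_\beta)$ with $\omega_\alpha=h_{\alpha\beta}\omega_\beta$, i.e.\ exactly (\ref{cocycleric}). This is the transformation lemma, and it is exactly the computation sketched in the discussion preceding the proposition. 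Hence the $\{\omega_\alpha=0\}$ patch to a global Riccati distribution $\Ho$ on $S=\Proj(TM)$.

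Conversely, let $\Ho$ be a Riccati distribution. On each $\pi^{-1}(U_\alpha)\cong\Proj^1\times U_\alpha$ it is the kernel of a holomorphic $1$-form which, by transversality to the fibres, has nonvanishing $dz$-coefficient; dividing by it we write the form as $dz+\sum_i P_i(z,x)\,dx_i$. Requiring this form (up to a holomorphic scalar factor) to stay holomorphic and nonvanishing in the coordinate $w=1/z$ near the point at infinity of the fibre forces each $P_i$ to be a polynomial of degree $\le 2$ in $z$; collecting coefficients recovers the normal form (\ref{formric}), $\omega_\alpha=dz+\gamma_\alpha+\delta_\alpha z+\eta_\alpha z^2$, with the glueing (\ref{cocycleric}). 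Defining $\theta_\alpha$ by the displayed matrix above gives $\Tr(\theta_\alpha)=0$, and the transformation lemma yields (\ref{equationdistribution}), so $\{\theta_\alpha\}$ is a reduced Riccati connection. These two assignments are visibly mutually inverse, proving the first bijection. For the second, $\Ho$ is a foliation precisely when $\omega_\alpha\wedge d\omega_\alpha=0$ on each chart; expanding $\omega_\alpha\wedge d\omega_\alpha$ with $\omega_\alpha=dz+\gamma_\alpha+\delta_\alpha z+\eta_\alpha z^2$ and sorting by powers of $z$ (the top, cubic, coefficient vanishes automatically since a $1$-form wedged with itself is zero) produces exactly the three scalar identities into which the matrix equation $d\theta_\alpha+\theta_\alpha\wedge\theta_\alpha=0$ decomposes, i.e.\ the vanishing of the curvature (\ref{equationcurvatureRiccati}) of $\DR$. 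So Riccati foliations correspond to reduced Riccati connections of zero curvature, and the ``in particular'' clause is then only a restatement: such a connection is the same as a family of traceless holomorphic $2\times 2$ matrices $\theta_\alpha$ on $U_\alpha$ satisfying (\ref{curvature}), (\ref{trace}) and the change-of-coordinate rule (\ref{equationdistribution}).

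The main obstacle is the transformation lemma together with the a priori degree bound in the converse direction: verifying that the M\"obius action of $g_{\alpha\beta}$ on $z$ turns the matrix cocycle (\ref{equationdistribution}) into the scalar cocycle (\ref{cocycleric}), and that holomorphy at the fibrewise point at infinity forces the $z$-degree of the Riccati form to be $\le 2$. Both are finite computations already indicated above; everything else in the argument is formal bookkeeping between the matrix picture on $M$ and the distribution picture on $\Proj(TM)$.
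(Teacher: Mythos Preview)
Your proposal is correct and follows the same approach as the paper: the proposition is stated immediately after the discussion that derives the local normal form (\ref{formric}), the associated traceless matrix $\theta$, the equivalence of the cocycle (\ref{cocycleric}) with (\ref{equationdistribution}), and the equivalence of $\omega\wedge d\omega=0$ with $d\theta+\theta\wedge\theta=0$. You have simply filled in the details (the degree bound via holomorphy at $z=\infty$ and the explicit matching of the integrability condition with zero curvature) that the paper leaves to the reader.
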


\begin{cor}\label{projectivecon}
If $M$ admits a Riccati connection, then it admits a projective connection.  
\end{cor}
\begin{proof}
By Proposition \ref{conectionsdistrbutions} there is $\Ho$ a Riccati distribution on $\Proj(TM)$. 
By intersecting $\Ho$ with the contact distribution $\Do$ we obtain a geodesic foliation $\Go$ that induces 
a projective connection on $M$. See \cite[Section 2]{FallaLoray} for more details.  
\end{proof}

%?????????????????????????????????????????????????????????????????????????????

\subsection{Curvature of a Riccati distribution}
Using the same notation as in (\ref{formric}),
we write 

\[
\left(\begin{array}{c}
\gamma\\
\delta\\
\eta
\end{array}\right)
=\left(\begin{array}{c}
\gamma_1\\
\delta_1\\
\eta_1
\end{array}\right)dx+
\left(\begin{array}{c}
\gamma_2\\
\delta_2\\
\eta_2
\end{array}\right)dy, 
\quad \gamma_{i}, \delta_i,\eta_i\in \Ol(U). 
\]

We define the following 1-form 
 \[   
\kappa=(  \frac{\delta_{1}}{2}- \gamma_{2})dx+( \eta_{1}-  \frac{\delta_{2}}{2})dy, 
\] 
which we call 
the {\it connection form}. Now, we will see that $\kappa$  
determines the so-called holomorphic connection on $\det(TM)$. 

\begin{prop}\label{connecdet} 
If $((x,y),U_{\alpha})$, $((\tilde x,\tilde y),U_{\beta})$ are two local systems of coordinates on $M$,  
with $U_{\alpha}\cap U_{\beta}\neq \emptyset$, then 
\[
\kappa_{\alpha}-\kappa_{\beta}=\frac{1}{2}\Tr(dg_{\alpha\beta}g^{-1}_{\alpha\beta})
=\frac{1}{2}d(\log (\det g_{\alpha\beta})). 
\]
In particular, $\{-2\kappa_{\alpha}\}_{\alpha}$ defines a holomorphic connection on 
the cano\-nical bundle $K_M$.   
\end{prop}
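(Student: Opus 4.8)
The plan is to realise $\kappa$ as the image of the matrix $\theta$ under a \emph{natural} contraction, and then apply that contraction to the transition rule (\ref{equationdistribution}). Using the identification $\End(TM)\cong TM\otimes T^*M$ (so that a matrix $A=(A_{jk})$ corresponds to $\sum_{j,k}A_{jk}\,\partial_{x_j}\otimes dx_k$), let $c\colon T^*M\otimes\End(TM)\to T^*M$ be the contraction pairing the $1$-form factor with the $TM$-factor of $\End(TM)$; explicitly, for a matrix of $1$-forms $\Xi=\sum_i\Xi^{(i)}\,dx_i$ one has $c(\Xi)=\sum_{i,k}\Xi^{(i)}_{ik}\,dx_k$ (the remaining natural contraction, $\Xi\mapsto\Tr\Xi$, is useless here since $\DR$ is reduced). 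Checking this formula against the matrix $\theta$ and the definition of $\kappa$ gives $\kappa=-c(\theta)$ in every chart. Since the second equality of the proposition is just Jacobi's formula $\Tr(dg_{\alpha\beta}g_{\alpha\beta}^{-1})=d\log\det g_{\alpha\beta}$, it suffices to prove $\kappa_\alpha-\kappa_\beta=\tfrac12\Tr(dg_{\alpha\beta}g_{\alpha\beta}^{-1})$.

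Write $g=g_{\alpha\beta}$. The operator $c$ is built from the canonical evaluation pairing $T^*M\otimes TM\to\Ol$, hence it is natural with respect to changes of trivialisation of $TM$ and of coordinates on $M$. Consequently, applying $c$ to the conjugation term $g\theta_\beta g^{-1}$ of (\ref{equationdistribution}) produces the same $1$-form as $c(\theta_\beta)$ computed in the $U_\beta$-chart, namely $-\kappa_\beta$; and applying $c$ to the identity term gives $c\bigl(\tfrac12\Tr(dg\,g^{-1})\,I\bigr)=\tfrac12\Tr(dg\,g^{-1})$, since $c(\phi\,I)=\phi$ for every scalar $1$-form $\phi$ (because $I$ corresponds to $\sum_j\partial_{x_j}\otimes dx_j$). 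Thus applying $c$ to (\ref{equationdistribution}) reduces the whole statement to the single identity
\[
c\bigl(dg\,g^{-1}\bigr)=\Tr\bigl(dg\,g^{-1}\bigr).
\]

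Proving this identity is the heart of the argument, and it is where the hypothesis genuinely enters: it fails for an arbitrary invertible matrix-valued function and holds only because $g$ is the Jacobian $g_{ik}=\partial x_i/\partial\tilde x_k$ of a coordinate change. Expanding $dg$ in the $U_\alpha$-coordinates, the coefficient of $dx_m$ in $c(dg\,g^{-1})$ equals $\sum_{i,k}(\partial_{x_i}g_{ik})(g^{-1})_{km}$, whereas the coefficient of $dx_m$ in $\Tr(dg\,g^{-1})$ equals $\sum_{i,k}(\partial_{x_m}g_{ik})(g^{-1})_{ki}$. Rewriting the $x$-derivatives by the chain rule, $\partial_{x_i}g_{ik}=\sum_a(\partial^2 x_i/\partial\tilde x_k\partial\tilde x_a)(g^{-1})_{ai}$ and $\partial_{x_m}g_{ik}=\sum_b(\partial^2 x_i/\partial\tilde x_k\partial\tilde x_b)(g^{-1})_{bm}$, both coefficients turn into the same triple sum of terms $(\partial^2 x_i/\partial\tilde x_k\partial\tilde x_a)\,(g^{-1})_{ai}\,(g^{-1})_{km}$ after a relabelling of the summation indices, the two relabellings agreeing precisely because $\partial^2 x_i/\partial\tilde x_k\partial\tilde x_a=\partial^2 x_i/\partial\tilde x_a\partial\tilde x_k$. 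Substituting back yields $-\kappa_\alpha=-\kappa_\beta-\Tr(dg\,g^{-1})+\tfrac12\Tr(dg\,g^{-1})$, that is $\kappa_\alpha-\kappa_\beta=\tfrac12\Tr(dg_{\alpha\beta}g_{\alpha\beta}^{-1})=\tfrac12\,d\log\det g_{\alpha\beta}$, as claimed.

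A more pedestrian route avoids the contraction formalism altogether: extract from (\ref{equationdistribution}) the transformation laws of $\gamma,\delta,\eta$ (a $2\times 2$ conjugation together with the $dg\,g^{-1}$ correction), convert $d\tilde x,d\tilde y$ into $dx,dy$ by means of $g$, and substitute into the definition of $\kappa$; the bookkeeping is heavier, but the only nontrivial cancellation is once again the equality of mixed partials.
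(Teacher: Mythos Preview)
Your proposal is correct and takes a genuinely different route from the paper. The paper introduces the torsion-like operator $T(X,Y)=\DR_XY-\DR_YX-[X,Y]$, checks that $T(\partial_x,\partial_y)=-\kappa_2\partial_x+\kappa_1\partial_y$, derives the transformation law $T(fX,gY)=fgT(X,Y)+\tfrac12(fY(g)X-gX(f)Y)$, and then computes $T(\partial_{\tilde x},\partial_{\tilde y})$ in two ways and compares. You instead recognise $\kappa$ as the image of $\theta$ under a natural contraction $c\colon T^*M\otimes\End(TM)\to T^*M$, apply $c$ directly to the transition rule for $\theta$, and reduce everything to the identity $c(dg\,g^{-1})=\Tr(dg\,g^{-1})$, which you verify by the symmetry of mixed partials. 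Both arguments ultimately hinge on the fact that $g$ is a Jacobian (in the paper this is hidden in the Lie bracket and the specific form of the law for $T$), but your approach isolates this single ingredient cleanly, is coordinate-light once $\kappa=-c(\theta)$ is established, and makes the higher-dimensional analogue transparent. The paper's approach, on the other hand, is tailored to dimension $2$ and has the advantage of interfacing directly with the torsion tensor used later when matching Riccati distributions to torsion-free affine connections.
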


\begin{proof}
We denote by 
$\kappa=\kappa_{\alpha}=\kappa_1dx+\kappa_2 dy$, 
$\tilde{\kappa}=\kappa_{\beta}=\tilde\kappa_1d\tilde x+\tilde\kappa_2 d\tilde y$ and 
$g=g_{\alpha \beta}=\left(\begin{array}{cc}
g_{11}&g_{12}\\
g_{21}&g_{22}
\end{array}\right)$. Let $\DR$ be the Riccati 
connection induced by $\Ho$, and we define 
$T(X,Y)=\DR_{X}Y-\DR_{Y}X-[X,Y]$. 
We see that
$T(\partial_x,\partial_y)=-\kappa_2\partial_x+\kappa_1\partial_y$.
 
We can verify the following properties: $T$ is $\C$-bilinear 
and
\[
T(fX,gY)=fgT(X,Y)+\frac{1}{2}(fY(g)X-gX(f)Y),
\]
where $f,g\in\Ol_M$, $X,Y\in TM$.  
Using these properties and the fact that $\partial_{\tilde x}=g_{11}\partial_x+g_{21}\partial_y$ and  
$\partial_{\tilde y}=g_{12}\partial_x+g_{22}\partial_y$, we get  

\begin{align}\label{E:connection1}
T(\partial_{\tilde x},\partial_{\tilde y})
&=(-(\det g) \kappa_2+\frac{1}{2}h_1)\partial_x
+((\det g )\kappa_1+\frac{1}{2}h_2)\partial_y,
\end{align}
where, 
\begin{align*}
h_1&=g_{11}(\partial_x(g_{12})+\partial_y(g_{22}))-g_{12}(\partial_x(g_{11})+\partial_y(g_{21})),\\
h_2&=g_{21}(\partial_x(g_{12})+\partial_y(g_{22}))-g_{22}(\partial_x(g_{11})+\partial_y(g_{21})).
\end{align*}

On the other hand, we have 
\begin{align}\label{E:connection2}
T(\partial_{\tilde x},\partial_{\tilde y}) 
&=(-\tilde \kappa_2 g_{11}+\tilde \kappa_1 g_{12})\partial_x+
(-\tilde \kappa_2 g_{21}+\tilde \kappa_1 g_{22})\partial_y.
\end{align}
Comparing (\ref{E:connection1}) and (\ref{E:connection2}), we get 

\[
(\det g) \kappa=(\det g) \tilde \kappa+\frac{1}{2}((h_1g_{21}-h_2g_{11})d\tilde x+(h_1g_{22}-h_2g_{12})d\tilde y).
\]
It is not difficult to verify that  
\[
(\det g) \Tr(dg.g^{-1})=(h_1g_{21}-h_2g_{11})d\tilde x+(h_1g_{22}-h_2g_{12})d\tilde y. 
\]
This completes the proof of the proposition.
\end{proof}

We define the {\it curvature} of $\Ho$ as 
\[   
K({\Ho})=d\kappa\in \Omega^2(M), 
\] 
We say that $\Ho$ is {\it parallelizable} if $K({\Ho})=0$. 

\begin{teo} \label{flat}
Let $\Ho$ be a Riccati distribution on $\Proj(TM)$ and $\{-2\kappa_{\alpha}\}_{\alpha}$ be the
holomorphic connection on the canonical bundle $K_M$ induced by $\Ho$. Then 
$\{-2\kappa_{\alpha}\}_{\alpha}$ is flat if and only if $\Ho$ is parallelizable. 

If $M$ is a compact complex surface,  %and $\Ho$ is a Riccati distribution on $\Proj(TM)$, 
then $\Ho$ is parallelizable.
\end{teo}

\begin{proof}
The first part follows directly from the definition. 

For the second part, by Proposition \ref{connecdet}, the 1-forms $\{2\kappa_{\alpha}\}_{\alpha}$ define a holomorphic connection 
on $\det (TM)$. The curvature of this connection is $2d\kappa$, which represents $c_1(\det TM)=c_1(M)$. 
We conclude by following the same ideas as Proposition \ref{restricsurfaces}. 
\end{proof}

\subsection{Riccati distributions and affine connections}

An affine connection on $M$ is a (linear) holomorphic connection on the tangent bundle 
$T M$, i.e., a $\C$-bilinear map $\con:TM\times TM\rightarrow TM$ 
satisfying the Leibnitz rule $\con_X(f \cdot Z)= f \cdot\con_X(Z)+df(X) Z$ 
and $\con_{fX}( Z)=f\con_X(Z)$, 
for any holomorphic function $f$ and any vector fields $X,Z$.
The connection $\con$ is torsion free when the torsion vanishes, that is 
$\con_XZ-\con_X Z-[X,Z]=0$, for all vector fields $X,Z$, and the curvature of $\con$ 
is denoted by $K_{\con}=\con\cdot\con$.  
The connection $\con$ is flat when the curvature vanishes, that is $K_{\con}=\con\cdot\con=0$. 

We describe a map from the set of connections to into the set of distributions as follows:
In coordinates $(x,y)\in U\subset \C^2$, a trivialization of $TU$ is given by the basis 
$(\partial_x,\partial_y)$ and the affine connection is given by
\[\con(Z)=d(Z)+\theta Z, \quad \theta=\left(\begin{array}{cc}
\theta_{11}&\theta_{12}\\
\theta_{21}&\theta_{22}
\end{array}\right),  \]
where $Z=z_1\partial_x +z_2\partial_y$ and $\theta_{ij}\in\Omega^1(U)$. On the projectivized bundle 
$\Proj(TU)$, with trivializing coordinate $z=z_2/z_1$, the equation $\con=0$ induces a 
Riccati distribution $\Ho^{\con}$ that is locally given by 

\begin{equation}\label{affindis} 
\omega=dz+\theta_{21}+(\theta_{22}-\theta_{11})z-\theta_{12}z^2.
\end{equation}

\begin{teo} \label{Riccatiaffineconnection}
This map induces a one-to-one correspondence between
torsion free affine connections on $M$ and Riccati distributions on $\Proj(TM)$.

Furthermore, there are one-to-one correspondences between:  
\begin{enumerate} 
\item Torsion free affine connections $\nabla$ on $M$ with   
zero curvature of the trace of connection $\Tr(\nabla)$, i.e.,  $K_{\Tr(\nabla)}=0$,  
and paralleli\-za\-ble distributions on $\Proj(TM)$.

\item Torsion free affine connections $\nabla$  
on $M$ with $K_{\nabla}=\frac{1}{2}K_{\Tr(\nabla)}I$ and Riccati foliations on $\Proj(TM)$.

\item %Torsion free affine connections $\nabla$ on $M$ with
%zero curvature $K_{\nabla}=0$, i.e. 
Affine structures on $M$ and parallelizable Riccati foliations on $\Proj(TM)$.   
\end{enumerate}
\end{teo}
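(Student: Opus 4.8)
The plan is to exploit the explicit coordinate formula \eqref{affindis} relating an affine connection to its associated Riccati distribution, together with the already-established equivalence (Proposition~\ref{conectionsdistrbutions}) between Riccati distributions and reduced Riccati connections. First I would check that the map $\con\mapsto\Ho^{\con}$ is well-defined: given two torsion-free affine connections with the same associated Riccati distribution, the $1$-forms $\omega$ in \eqref{affindis} differ by a nonvanishing holomorphic factor; comparing the $z^0$, $z^1$ and $z^2$ coefficients forces $\theta_{21}=\tilde\theta_{21}$, $\theta_{12}=\tilde\theta_{12}$ and $\theta_{22}-\theta_{11}=\tilde\theta_{22}-\tilde\theta_{11}$, while the torsion-free condition pins down the off-diagonal symmetric part of the Christoffel symbols. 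This shows that the Riccati distribution determines the reduced Riccati connection $\DR$ associated to $\con$ (the matrix $\theta$ of Proposition~\ref{conectionsdistrbutions} is the trace-free part of the connection matrix), so $\con$ is recovered up to the addition of a term $\gamma(X)Z$ with $\gamma$ a $1$-form. To get a genuine bijection I would observe that for a \emph{torsion-free} connection the trace part is not free: writing $\Tr(\con)=\theta_{11}+\theta_{22}$, the connection on $\det TM$ induced by $\con$ is the same as the one recorded by the connection form $\kappa$ of Proposition~\ref{connecdet} (this is exactly the computation already done there, since $T\equiv 0$ for the Levi-Civita-type torsion-free connection and $\kappa$ measures that trace). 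Hence the full data of $\con$ — trace-free part plus trace — is recovered from $\Ho^{\con}$, giving the first bijection.

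Next I would translate curvature. From \eqref{affindis} the integrability condition $\omega\wedge d\omega=0$ for $\Ho^{\con}$ is, by Proposition~\ref{conectionsdistrbutions}, equivalent to the vanishing of the curvature of the reduced Riccati connection $\DR$ attached to $\con$, i.e.\ $d\theta+\theta\wedge\theta=0$ for the trace-free matrix $\theta$ of \eqref{formric}. Relating $\theta$ back to the connection matrix of $\con$ one computes $K_{\con}=(d+\,\cdot\wedge\cdot)$ of the full matrix; splitting into trace-free and trace parts shows $K_{\con}=W+\tfrac12 K_{\Tr(\con)}I$ where $W$ is the curvature of $\DR$ and $K_{\Tr(\con)}=d\,\Tr(\con)=2d\kappa=2K(\Ho^{\con})$ by Proposition~\ref{connecdet}. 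Therefore $\Ho^{\con}$ being parallelizable ($K(\Ho^{\con})=0$) is equivalent to $K_{\Tr(\con)}=0$, which is statement~(1); and $\Ho^{\con}$ being a Riccati foliation ($W=0$, i.e.\ $\omega\wedge d\omega=0$) is equivalent to $K_{\con}=\tfrac12 K_{\Tr(\con)}I$, which is statement~(2).

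For statement~(3): an affine structure on $M$ is (by definition, as recalled in the introduction) an atlas whose transition maps lie in the affine group, equivalently a torsion-free \emph{flat} affine connection ($K_{\con}=0$) which moreover is the pullback of the flat connection on $\C^2$; in particular such a $\con$ has $K_{\Tr(\con)}=0$ as well, so it satisfies both conditions (1) and (2) simultaneously, which is exactly the condition defining a \emph{parallelizable Riccati foliation}. Conversely, given a parallelizable Riccati foliation $\Ho$, conditions (1) and (2) together give a torsion-free $\con$ with $K_{\con}=0$, hence $\con$ is locally the trivial connection in suitable coordinates, and these coordinates furnish an affine atlas; checking that two affine structures with the same foliation coincide reduces again to the injectivity argument of the first paragraph. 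I expect the main obstacle to be the bookkeeping in the second paragraph: carefully verifying the identity $K_{\con}=W+\tfrac12 K_{\Tr(\con)}I$ and matching the off-diagonal entries of the curvature matrix against the $z^i$-coefficients of $\omega\wedge d\omega$, since a sign or factor-of-$2$ slip there would break the precise forms of conditions (1) and (2); the rest is formal once the well-definedness in paragraph one is in place.
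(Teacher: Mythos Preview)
Your proposal is correct and follows essentially the same route as the paper: both use Proposition~\ref{conectionsdistrbutions} to identify the trace-free part of the connection matrix with the reduced Riccati connection, and Proposition~\ref{connecdet} to supply the trace part via the connection form $\kappa$, then split the curvature as $K_{\con}=W+(\text{scalar})\cdot I$ to read off conditions (1)--(3). The paper's presentation is slightly more direct in that it writes down the inverse map explicitly as $\tilde\theta_{\alpha}=\theta_{\alpha}-\kappa_{\alpha}I$ rather than arguing injectivity first, and its sign convention gives $K_{\Tr(\nabla)}=-2K(\Ho^{\con})$ rather than $+2$; as you correctly anticipated, this sign does not affect the final equivalences.
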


\begin{proof} To verify that we have a bijection we will describe the inverse mapping 
as follows:
Let $\Ho$ be a Riccati distribution on $\Proj(TM)$. By Proposition \ref{conectionsdistrbutions} 
we have a reduced Riccati connection $\DR$ induced by $\Ho$ that verifies  
$\theta_{\alpha}=g_{\alpha\beta}\theta_{\beta}g_{\alpha\beta}^{-1}
-dg_{\alpha\beta}g_{\alpha\beta}^{-1}+\frac{1}{2}\Tr(dg_{\alpha\beta}g_{\alpha\beta}^{-1})I$, 
where $\theta_{\alpha}$ is the matrix of 1-forms of $\DR$ in $U_{\alpha}$. 
By Proposition \ref{connecdet} we have 
$\frac{1}{2}\Tr(dg_{\alpha\beta}g_{\alpha\beta}^{-1})=\kappa_{\alpha}-\kappa_{\beta}$, 
where $\kappa_{\alpha}\in \Omega^1(U_{\alpha})$ is the connection form. Define 
$\tilde\theta_{\alpha}=\theta_{\alpha}-\kappa_{\alpha}I$,  so that 
$\tilde\theta_{\alpha}=g_{\alpha\beta}\tilde\theta_{\beta}g_{\alpha\beta}^{-1}
-dg_{\alpha\beta}g_{\alpha\beta}^{-1}$. 
Then the 1-forms $\{\tilde\theta_{\alpha}\}_{\alpha}$ define an affine connection
 $\nabla^{\Ho}$ on $M$.
  
To verify items 1., 2. and 3. it is sufficient to see that $K_{\Tr(\nabla)}=-2K(\Ho^{\con})$ 
and $K_{\con}=W-K(\Ho^{\con})I$, where $W$ is the curvature of the Riccati connection 
induced by $\Ho^{\con}$. The theorem is proved.    
\end{proof} 

\begin{cor}\label{afinRicadistri} 
The following asser\-tions are equivalent
\begin{itemize}
\item $M$ admits an affine connection,
\item $M$ admits a Riccati connection,
\item $\Proj (TM)$ admits a Riccati distribution.   
\end{itemize}
\end{cor}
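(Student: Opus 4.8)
The plan is to deduce all three equivalences from the earlier structural results in this section, so that no genuinely new computation is needed. The first equivalence --- $M$ admits an affine connection if and only if $\Proj(TM)$ admits a Riccati distribution --- follows immediately from Theorem \ref{Riccatiaffineconnection}: indeed, given any torsion-free affine connection $\nabla$ on $M$ (and torsion-free affine connections always exist locally, so the only issue is whether a global one exists), the theorem produces a Riccati distribution $\Ho^{\nabla}$ on $\Proj(TM)$, and conversely the inverse mapping $\Ho\mapsto \nabla^{\Ho}$ constructed in that proof produces an affine connection from any Riccati distribution. Here I would note that an arbitrary (not necessarily torsion-free) affine connection $\nabla$ on $M$ gives a torsion-free one by the standard symmetrization $\nabla'_XY=\tfrac12(\nabla_XY+\nabla_YX+[X,Y])$, so ``admits an affine connection'' and ``admits a torsion-free affine connection'' are the same condition on $M$.

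Next I would handle the equivalence between admitting a Riccati connection and admitting a Riccati distribution. By Proposition \ref{conectionsdistrbutions} there is a one-to-one correspondence between \emph{reduced} Riccati connections on $M$ and Riccati distributions on $\Proj(TM)$, so in particular $\Proj(TM)$ admits a Riccati distribution exactly when $M$ admits a reduced Riccati connection. It remains to observe that $M$ admits a reduced Riccati connection if and only if it admits a Riccati connection: one direction is trivial (a reduced Riccati connection is a Riccati connection), and for the other direction the Remark following the definition of the trace shows that every Riccati connection $\DR$ on $M$ determines a reduced Riccati connection $\tilde\DR$ via $\tilde\DR_XY=\DR_XY-\tfrac{\Tr(\DR)(X)}{n}Y$. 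Chaining these two biconditionals closes the loop.

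The main obstacle, such as it is, is bookkeeping rather than mathematics: one must be careful that ``affine connection'' in the Corollary means an arbitrary holomorphic linear connection on $TM$ while the correspondences in Theorem \ref{Riccatiaffineconnection} and Proposition \ref{conectionsdistrbutions} are phrased in terms of torsion-free (resp.\ reduced) objects, and the passage between these is exactly the symmetrization and trace-reduction operations noted above. I would therefore organize the proof as a short cycle of implications: an affine connection yields a torsion-free affine connection (symmetrize), which yields a Riccati distribution on $\Proj(TM)$ (Theorem \ref{Riccatiaffineconnection}), which yields a reduced Riccati connection on $M$ (Proposition \ref{conectionsdistrbutions}), which is in particular a Riccati connection --- and finally a Riccati connection yields back a Riccati distribution on $\Proj(TM)$ by reducing its trace and applying Proposition \ref{conectionsdistrbutions} again, completing the equivalence of all three statements.
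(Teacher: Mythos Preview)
Your argument is correct and is essentially the same as the paper's, which states the result as an immediate corollary of Theorem~\ref{Riccatiaffineconnection} and Proposition~\ref{conectionsdistrbutions} without writing out a proof. You have in fact been slightly more careful than the paper: the symmetrization step $\nabla'_XY=\tfrac12(\nabla_XY+\nabla_YX+[X,Y])$ is needed because the corollary speaks of arbitrary affine connections while Theorem~\ref{Riccatiaffineconnection} concerns only torsion-free ones, and the paper does not make this passage explicit.
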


It is worth pointing out that  by using Corollary \ref{projectivecon} 
together with the classification of (normal) projective connections 
\cite{Kob2, Kob3} and Proposition \ref{restricsurfaces}  one can obtain another proof of Corollary  \ref{afinRicadistri} in the case of a compact complex surface.  

\begin{cor} 
If $M$ is compact, then we have a one-to-one corres\-pondence between 
Riccati foliations on $\Proj(TM)$ and affine structures on $M$.  
\end{cor}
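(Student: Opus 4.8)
The plan is to deduce this final corollary by combining the pieces already assembled in the excerpt. Recall that Theorem~\ref{flat} shows every Riccati distribution on $\Proj(TM)$ is parallelizable when $M$ is compact, and that item (3) of Theorem~\ref{Riccatiaffineconnection} gives a one-to-one correspondence between affine structures on $M$ and parallelizable Riccati foliations on $\Proj(TM)$. So the only gap to close is the following: when $M$ is compact, every Riccati \emph{foliation} is automatically parallelizable (this is immediate from Theorem~\ref{flat}, since a Riccati foliation is in particular a Riccati distribution), and conversely every parallelizable Riccati distribution that we want to match with an affine structure must in addition be Frobenius integrable. First I would invoke Theorem~\ref{flat} to see that the adjective ``parallelizable'' is vacuous on a compact $M$: the set of Riccati foliations on $\Proj(TM)$ coincides with the set of parallelizable Riccati foliations.

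Next I would chase the correspondences. By Proposition~\ref{conectionsdistrbutions}, Riccati foliations on $\Proj(TM)$ correspond bijectively to reduced Riccati connections with zero curvature on $M$. By Theorem~\ref{Riccatiaffineconnection}(2), torsion-free affine connections $\nabla$ with $K_{\nabla}=\tfrac12 K_{\Tr(\nabla)}I$ correspond to Riccati foliations; intersecting with the parallelizability condition $K(\Ho^{\con})=0$, which by the identity $K_{\Tr(\nabla)}=-2K(\Ho^{\con})$ from the proof of Theorem~\ref{Riccatiaffineconnection} forces $K_{\Tr(\nabla)}=0$ and hence $K_{\nabla}=0$, we land exactly on the torsion-free flat affine connections, i.e.\ the affine structures. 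Thus item (3) of Theorem~\ref{Riccatiaffineconnection} already packages the bijection between affine structures on $M$ and parallelizable Riccati foliations; combining with the first step, on a compact $M$ this is the same as the bijection between affine structures on $M$ and Riccati foliations on $\Proj(TM)$.

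The proof itself is therefore short: state that by Theorem~\ref{flat} every Riccati distribution, a fortiori every Riccati foliation, on $\Proj(TM)$ is parallelizable when $M$ is compact, so that ``parallelizable Riccati foliation'' and ``Riccati foliation'' describe the same objects; then apply Theorem~\ref{Riccatiaffineconnection}(3) to obtain the one-to-one correspondence with affine structures on $M$. I expect no real obstacle here, since all the substantive work---the cocycle computation in Proposition~\ref{connecdet}, the curvature identities in Theorem~\ref{Riccatiaffineconnection}, and the vanishing argument of Proposition~\ref{restricsurfaces}/Theorem~\ref{flat}---has already been done. The one point requiring a sentence of care is making explicit that a Riccati foliation is a special case of a Riccati distribution, so that Theorem~\ref{flat} applies to it and the ``parallelizable'' hypothesis in Theorem~\ref{Riccatiaffineconnection}(3) is automatically satisfied; this is the step I would write out, however briefly, rather than leave to the reader.
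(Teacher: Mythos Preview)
Your proposal is correct and matches the paper's own proof, which simply states that the corollary follows from Theorem~\ref{flat} and Theorem~\ref{Riccatiaffineconnection}. Your write-up merely unpacks this combination with more detail than the paper provides, but the logical route is identical.
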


\begin{proof} 
This follows from Theorem \ref{flat} and Theorem \ref{Riccatiaffineconnection}.   
\end{proof} 

\subsection{Monodromy of a Riccati foliation}

The {\it monodromy representation} of the Riccati foliation $\Ho$ on $S=\Proj(TM)$ is the representation
\[
\rho_{\Ho}: \pi_1(M)\rightarrow \aut(\Proj^1)
\]
defined by lifting paths on $M$ to the leaves of $\Ho$. 
The image of $\rho_{\Ho}$ in $\aut(\Proj^1)$ is, by definition, the {\it monodromy group} of $\Ho$, 
denoted by $\mon(\Ho)$. Let's see some particular cases. 

\subsubsection{Pencils of foliations and Riccati foliations} 

A regular pencil of foliations $\Po$ on $U$ is a one-parameter family of foliations 
$ \{ \Fo_t \}_{t \in \Proj^1}$ defined by $ \Fo_t =[ \omega_t = 0]$ for a pencil of 1-forms 
$\{ \omega_t= \omega_0+t \omega_{ \infty}\}_{t \in \Proj^1}$ with 
$\omega_0, \omega_{ \infty} \in \Omega^1(U)$ and $ \omega_0 \wedge  \omega_{ \infty}\neq 0$ 
on $U$. The pencil of 1-forms defining $ \{ \Fo_t \}_{t \in \Proj^1}$ is unique up to multiplication by
a non vanishing function: $\tilde\omega_t=f\omega_t$ for all $t \in \Proj^1$ and  $f\in \Ol( U)$. In fact, the parametrization by $t \in \Proj^1$ is not intrinsic; we will say that $ \{ \Fo_t \}_{t \in \Proj^1}$ and $ \{  \tilde\Fo_t \}_{t \in \Proj^1}$ define the same pencil on $U$ if there is a M\"obius transformation $\varphi \in \aut(\Proj^1)$ such that $ \tilde\Fo_t= \Fo_{\varphi(t)}$ for all 
$t \in \Proj^1$. \\
The graphs $S_t$ of the foliations $\Fo_t$ are disjoint sections (since foliations 
are pairwise transversal) and form a codimension 
one foliation $ \Ho$ on $ \Proj(TU)$ transversal to the projection $ \pi: \Proj(TU) \rightarrow U$. The foliation $ \Ho$ is a Riccati foliation, i.e. a Frobenius integrable Riccati distribution:
\[   
\Ho:[\omega=0], \quad \omega=dz+\gamma+\delta z+\eta z^2, \quad \omega\wedge d\omega=0.
\] 

In local coordinates $(x, y)$ such that $\Fo_0$ and $\Fo_{\infty}$ are defined by 
$dx = 0$ and $dy = 0$ respectively, we can assume the pencil is generated by $\omega_0=dx$ and 
$\omega_{\infty}=u(x, y)dy$ (we have normalized $\omega_0$) with $u(0, 0)\neq0$. Then the 
graph of each foliation $\Fo_t$ is given by the section $S_t = \{z=-\frac{1}{tu(x,y)}\}\subset \Proj(TU)$. 
These sections are the leaves of the Riccati foliation $\Ho : [dz +\frac{du}{u}z = 0]$.

We know that the curvature $K(\Po)$ of a regular pencil $\Po=\{\Fo_t\}_{t\in\Proj^1}$ 
is a $2$-form (see \cite{Alcides}). 
For instance, if $\Po$ is in normal form $\Po = dx + tu(x, y)dy$, then the curvature is given by
\[
K(\Po) = -(\ln u)_{xy} dx\wedge dy.
\]
We recall that a pencil $\Po$ is {\it flat} when its curvature is zero, that is $K(\Po)=0$.

On the other hand, $\Po$ induces a Riccati foliation $\Ho : [dz +\frac{du}{u}z = 0]$ on $\Proj(TU)$.
So we have $\kappa=\frac{u_x}{2u}dx-\frac{u_y}{2u}dy$. Thus 
the curvature of $K(\Ho)=K(\Po)=d\kappa$, this means that the definition of the curvature of a 
Riccati distribution extends the definition of the curvature of a pencil of foliations. 
So, we have the following proposition. 
\begin{prop} \label{pencilRiccati} 
The following data are equivalent:
\begin{itemize}
\item a regular pencil of foliations $\{\Fo_t\}_{t\in\Proj^1}$ on $M$,
\item a Riccati foliation $\Ho$ on $\Proj(TM)$ with trivial monodromy, i.e., $\mon(\Ho)=\{id\}$. 
\end{itemize}
Furthermore, if $M$ is compact and $\Po$ is a regular pencil on $M$, then $\Po$ is flat. 
\end{prop}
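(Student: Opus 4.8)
The plan is to treat the claimed equivalence as a pair of mutually inverse constructions — one of which has essentially been carried out in the paragraphs preceding the statement — and then to deduce flatness in the compact case directly from Theorem~\ref{flat}.

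For the direction from pencils to Riccati foliations, recall that a regular pencil $\Po=\{\Fo_t\}_{t\in\Proj^1}$ on $M$ produces, by taking the graph $S_t\subset\Proj(TM)$ of the direction field defining $\Fo_t$, a family of sections of $\pi:\Proj(TM)\to M$ which are pairwise disjoint (because the $\Fo_t$ are pairwise transverse) and which cover $\Proj(TM)$ (because at each point $p$ the lines $\omega_0(p)+t\omega_\infty(p)$, $t\in\Proj^1$, run over all of $\Proj(T_p^*M)$, using $\omega_0\wedge\omega_\infty\neq 0$). Hence the $S_t$ are the leaves of a Riccati foliation $\Ho$. It then remains to check that $\mon(\Ho)=\{\id\}$: since each leaf $S_t$ is a global section of $\pi$, the lift of any loop $\gamma$ in $M$ based at $p_0$ and starting at a point $q\in S_t\cap\pi^{-1}(p_0)$ stays in $S_t$ and therefore ends at $q$ again, so the holonomy along $\gamma$ is the identity of $\pi^{-1}(p_0)\cong\Proj^1$.

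For the converse, I would use that a Riccati foliation equips $\Proj(TM)$ with the structure of a flat $\Proj^1$-bundle whose holonomy takes values in $\aut(\Proj^1)$; triviality of the monodromy means this flat bundle is trivial, i.e.\ there is a bundle isomorphism $\Phi:\Proj(TM)\to\Proj^1\times M$ over $M$ taking $\Ho$ to the horizontal foliation. Then $S_c:=\Phi^{-1}(\{c\}\times M)$, $c\in\Proj^1$, is a family of pairwise disjoint sections of $\pi$, each the graph of a line field and hence a foliation $\Fo_c$ on $M$, the $\Fo_c$ being pairwise transverse. In a coordinate chart $(x,y)$ with fibre coordinate $z$, the restriction of $\Phi$ to a fibre is a Möbius map, so $S_c$ is given by $z=g_c(x,y)$ with $g_c$ a Möbius function of $c$ whose coefficients are holomorphic in $(x,y)$; writing the corresponding $1$-form $dy-g_c\,dx$ and clearing denominators exhibits $\{\Fo_c\}$ locally as a pencil $c\,\omega_\infty+\omega_0$, with $\omega_0\wedge\omega_\infty\neq 0$ forced by transversality. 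One then checks that these two constructions are inverse to each other, which is immediate once both are set up.

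Finally, if $M$ is compact and $\Po$ is a regular pencil, let $\Ho$ be the Riccati foliation it determines; as noted just before the statement, $K(\Po)=K(\Ho)=d\kappa$, and Theorem~\ref{flat} gives $K(\Ho)=0$, hence $K(\Po)=0$ and $\Po$ is flat. The only genuinely delicate point in the whole argument is the passage, in the converse direction, from ``trivial monodromy'' to the global product structure $\Phi$: this rests on the standard fact that a flat $\Proj^1$-bundle with trivial holonomy representation is isomorphic, as a flat bundle, to the trivial one, after which everything else is bookkeeping in charts.
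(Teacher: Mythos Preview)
Your argument is correct and follows the paper's approach: the paper does not give a separate proof of this proposition but regards it as a summary of the preceding discussion, which constructs $\Ho$ from $\Po$ via the graphs $S_t$ and identifies $K(\Po)=K(\Ho)=d\kappa$, whence the compact case follows from Theorem~\ref{flat} exactly as you say. Your write-up is in fact more complete than the paper's, since you spell out the converse direction (trivial monodromy $\Rightarrow$ leaves are global sections $\Rightarrow$ pencil), which the paper leaves entirely to the reader.
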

\begin{proof} 
It follows from the previous construction and the fact that the leaves of the foliation $\Ho$ are defined 
by the graphs $S_t$ of the foliations $\Fo_t$ that the monodromy is trivial.
\end{proof}

The following lemma exhibits the normal form of a flat pencil that represents an affine structure. 

\begin{lem} (\cite[Lemma 2.1.4]{Alcides}). 
Let $\Po$ be a flat regular pencil defined in a neighborhood of
the origin $0\in\C^2$. Then, there is a change of local coordinates 
$\varphi(x, y)$ sending $\Po$ to the pencil defined by $dx + tdy$, $t\in \Proj^1$.

\end{lem}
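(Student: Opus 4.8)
The plan is to reduce the statement to a local normalization problem that has already been solved for us. A flat regular pencil $\Po$ near $0 \in \C^2$ is, by definition, a family $\{\Fo_t\}_{t \in \Proj^1}$ with $K(\Po) = 0$. First I would put $\Po$ in the normal form $\Po = dx + t\,u(x,y)\,dy$ by choosing coordinates in which $\Fo_0 = [dx = 0]$ and $\Fo_\infty = [dy = 0]$; this is the ``normal form'' already invoked in the discussion preceding the lemma, and it is available because the two members $\Fo_0$ and $\Fo_\infty$ are pairwise transverse, so their defining $1$-forms can be simultaneously straightened by the (holomorphic) flow-box theorem applied twice. In these coordinates the flatness hypothesis $K(\Po) = -(\ln u)_{xy}\,dx \wedge dy = 0$ says exactly that $(\ln u)_{xy} = 0$.

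Next I would exploit the equation $(\ln u)_{xy} = 0$: it forces $\ln u(x,y) = a(x) + b(y)$ for holomorphic functions $a$ of $x$ alone and $b$ of $y$ alone, hence $u(x,y) = f(x)\,g(y)$ with $f = e^{a}$, $g = e^{b}$ nonvanishing near the origin. Then $u(x,y)\,dy = f(x)\,g(y)\,dy = f(x)\,dG(y)$ where $G$ is a primitive of $g$ with $G(0) = 0$ and $G'(0) = g(0) \neq 0$, so $G$ is a local biholomorphism of the $y$-axis. The change of coordinates I would propose is $\tilde x = F(x)$, where $F$ is a primitive of $f$, and $\tilde y = G(y)$: then $d\tilde x = f(x)\,dx$ and $d\tilde y = g(y)\,dy$, so the pencil member $\omega_t = dx + t\,u(x,y)\,dy$ becomes, after multiplying by the nonvanishing factor $f(x)$ (which is allowed, since the pencil of $1$-forms is only defined up to a common nonvanishing function), $f(x)\,dx + t\,f(x)^2 g(y)\,dy$. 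I need to be slightly more careful here: to land exactly on $d\tilde x + t\,d\tilde y$ I should instead scale $\omega_t$ by $f(x)$ to get $f(x)\,dx + t\,f(x)^2\,g(y)\,dy$ and then observe this does not literally equal $d\tilde x + t\, d\tilde y$ unless $f$ is constant. The clean fix is to choose the biholomorphism differently: set $\tilde x = F(x)$ and $\tilde y = H(y)$ where $H' = g/ (\text{something})$—so I would actually present it as: after writing $u = f(x)g(y)$, absorb $f(x)$ into the $x$-coordinate and $g(y)$ into the $y$-coordinate so that $dx \mapsto$ a multiple of $d\tilde x$ and $u\,dy \mapsto$ the same multiple of $d\tilde y$, using that rescaling all $\omega_t$ by one common nonvanishing function is permitted. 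Concretely, choosing $\varphi(x,y) = (F(x), G(y))$ with $F' = f$, $G' = g$ gives $\varphi^*(d\tilde x) = f\,dx$ and $\varphi^*(d\tilde y) = g\,dy$; then $\varphi^*(d\tilde x + t\,d\tilde y) = f\,dx + t\,g\,dy = f\cdot(dx + t\,(g/f)\,dy)$, and $g/f$ is not $u$. So the genuinely correct substitution is to take $F' = f$ and $G'$ chosen so that $g/f' = u$ is impossible in general; the honest route is the one-variable-at-a-time rescaling I describe next.

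The correct and clean execution: having reduced to $\Po = [dx + t\,u\,dy]$ with $u = f(x)g(y)$, multiply the whole pencil of $1$-forms by $1/f(x)$ (nonvanishing), obtaining the equivalent pencil $[\,(1/f(x))\,dx + t\,g(y)\,dy\,]$. Now set $\tilde x = \int^x (1/f(s))\,ds$ and $\tilde y = \int^y g(s)\,ds$, both local biholomorphisms fixing the origin; then $d\tilde x = (1/f(x))\,dx$ and $d\tilde y = g(y)\,dy$, so the pencil becomes precisely $[\,d\tilde x + t\,d\tilde y\,]$, which is the pencil $dx + t\,dy$ in the new coordinates. This completes the proof. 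I do not expect a serious obstacle here; the only point requiring care is the bookkeeping of the common scaling factor — one must remember that a pencil of $1$-forms is defined only up to multiplication by a single nonvanishing holomorphic function — so that the two members $\Fo_0$ and $\Fo_\infty$ are normalized compatibly rather than independently. The integrability of $\Ho$ and the surface setting play no role beyond supplying the normal form and the curvature formula $K(\Po) = -(\ln u)_{xy}\,dx\wedge dy$ already recorded above.
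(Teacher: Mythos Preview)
The paper does not prove this lemma; it merely quotes it from \cite[Lemma 2.1.4]{Alcides} and moves on. Your final ``correct and clean execution'' paragraph is a valid proof and is in fact the standard one: normalize to $\Po=[dx+tu(x,y)\,dy]$, use $K(\Po)=0$ to factor $u=f(x)g(y)$, rescale the whole pencil by $1/f(x)$, and then integrate to get $d\tilde x+t\,d\tilde y$. The two aborted attempts preceding it should simply be deleted; they add nothing and would confuse a reader, since the issue you flagged (that $\varphi^*(d\tilde x+t\,d\tilde y)=f\,dx+t\,g\,dy$ has the wrong cross-ratio function $g/f$ rather than $fg$) is exactly what the global rescaling by $1/f$ fixes. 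One small point worth stating explicitly: the change of variables $(\tilde x,\tilde y)=(F(x),G(y))$ is a genuine local biholomorphism because $F'(0)=1/f(0)\neq0$ and $G'(0)=g(0)\neq0$, both nonvanishing since $u(0,0)\neq0$.
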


\subsubsection{Webs and pencils of foliations}

Let $\W= \Fo_1\boxtimes  \Fo_2 \boxtimes \Fo_3 \boxtimes \Fo_4$ be a regular 4-web on $(\C^2, 0)$
\[
\Fo_i = [X_i = \partial x + e_i(x, y)\partial y] = [\eta_i = e_idx-dy], \,\,\,i=1, 2, 3, 4.
\]
The cross-ratio
\[
 (\Fo_1, \Fo_2, \Fo_3, \Fo_4) := \frac{(e_1-e_3)(e_2-e_4)}{(e_2-e_3)(e_1-e_4)}
\]
is a holomorphic function on $(\C^2, 0)$ intrinsically defined by $\W$. Then, we have:
\begin{prop}
If $\W = \Fo_0\boxtimes \Fo_1\boxtimes \Fo_{\infty}$ is a regular 3-web on $(\C^2, 0)$, then there is
a unique pencil $\{\Fo_t\}_{t\in \Proj^1}$ that contains $\Fo_0, \Fo_1$ and $\Fo_{\infty}$ as 
its elements. More precisely, $\Fo_t$ is the only foliation such that
\[
(\Fo_t, \Fo_0, \Fo_1, \Fo_{\infty})=t.
\]
\end{prop}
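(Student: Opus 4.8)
The plan is to build the pencil explicitly out of the three foliations of $\W$, read off the cross-ratio in order to fix the parametrization, and then show that every pencil through $\Fo_0,\Fo_1,\Fo_\infty$ coincides with it.

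Write $\Fo_i=[\eta_i=0]$ with $\eta_i=e_i\,dx-dy$ and $e_0,e_1,e_\infty\in\Ol(\C^2,0)$, as in the normal form used above for webs. Regularity of $\W$ means $\Fo_0,\Fo_1,\Fo_\infty$ are pairwise transverse at every point of $(\C^2,0)$, i.e.\ the germs $e_0-e_1$, $e_1-e_\infty$, $e_0-e_\infty$ are nonvanishing. Put
\[
\omega_0:=(e_1-e_\infty)\,\eta_0,\qquad \omega_\infty:=(e_0-e_1)\,\eta_\infty,
\]
nonvanishing multiples of $\eta_0,\eta_\infty$, so they still define $\Fo_0,\Fo_\infty$, with $\omega_0\wedge\omega_\infty\neq0$ since $\eta_0,\eta_\infty$ are non-proportional; hence $\{\omega_s=\omega_0+s\,\omega_\infty\}_{s\in\Proj^1}$ is a regular pencil, and $\omega_0+s\,\omega_\infty$ is nonvanishing for every constant $s$, so each $\Fo^s:=[\omega_s=0]$ is a regular foliation. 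Dividing $\omega_s$ by its $dy$-coefficient gives the slope
\[
e^s=\frac{(e_1-e_\infty)e_0+s(e_0-e_1)e_\infty}{(e_1-e_\infty)+s(e_0-e_1)},
\]
and one checks directly that $e^0=e_0$, $e^\infty=e_\infty$ and $e^1=e_1$ — the factors $e_1-e_\infty$ and $e_0-e_1$ are chosen precisely to force the last equality — so the pencil $\{\Fo^s\}$ contains $\Fo_0,\Fo_1,\Fo_\infty$. Substituting $e^s$ into the cross-ratio and simplifying yields $(\Fo^s,\Fo_0,\Fo_1,\Fo_\infty)=1-s$; after the M\"obius reparametrization $s\mapsto 1-s$ we obtain a pencil $\{\Fo_t\}_{t\in\Proj^1}$ with $(\Fo_t,\Fo_0,\Fo_1,\Fo_\infty)=t$ for all $t$. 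Since at each point the cross-ratio with its last three slots fixed is a M\"obius bijection in the first slot, $\Fo_t$ is the unique foliation satisfying that identity, which is the "more precisely" clause.

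For uniqueness, let $\{\Fo'_t\}$ be any pencil containing $\Fo_0,\Fo_1,\Fo_\infty$. After a M\"obius reparametrization sending $0,\infty$ to the parameters of $\Fo_0,\Fo_\infty$ and then rescaling the whole family by a nonvanishing function, we may take it to be $\{\eta_0+t\,g\,\eta_\infty\}_{t}$ with $g\in\Ol(\C^2,0)$ nonvanishing, where $\Fo_1$ occurs at some constant parameter $t_1\neq0,\infty$. Expanding $\eta_1=a\,\eta_0+b\,\eta_\infty$ forces $a+b=1$ and $ae_0+be_\infty=e_1$, hence $a=\tfrac{e_1-e_\infty}{e_0-e_\infty}$ and $b=\tfrac{e_0-e_1}{e_0-e_\infty}$, both nonvanishing. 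That $\eta_0+t_1 g\,\eta_\infty$ be proportional to $\eta_1$ is equivalent to $g=\tfrac{1}{t_1}\tfrac{b}{a}=\tfrac{1}{t_1}\tfrac{e_0-e_1}{e_1-e_\infty}$; multiplying the family by the nonvanishing function $e_1-e_\infty$ then shows $\{\eta_0+t\,g\,\eta_\infty\}=\{\omega_0+s\,\omega_\infty\}$ after the reparametrization $s=t/t_1$, so $\{\Fo'_t\}$ is the pencil constructed above. Finally, exactly one M\"obius reparametrization of it satisfies $(\Fo_t,\Fo_0,\Fo_1,\Fo_\infty)=t$, which pins down the parametrization and completes the proof.

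The delicate point is the uniqueness step: one must separate the ambiguity of rescaling the two generators of a pencil by independent nonvanishing functions from the genuine equivalence of pencils (a global rescaling together with a M\"obius reparametrization), and observe that demanding the pencil also contain the third, "generic" foliation $\Fo_1$ kills exactly the residual function-worth of freedom, leaving only a constant, i.e.\ a M\"obius ambiguity. The regularity hypothesis on $\W$ enters precisely to guarantee that $e_0-e_1$, $e_1-e_\infty$, $e_0-e_\infty$, and hence $a,b,g$ and the normalizing factors, are nonvanishing throughout $(\C^2,0)$, so that all of these manipulations are legitimate on the whole germ of surface.
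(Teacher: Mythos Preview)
The paper states this proposition without proof, treating it as standard background in web geometry and deferring to the references \cite{Pereira} and \cite{FallaLoray} for details. Your argument is correct and supplies exactly the kind of direct verification one would expect: you build the pencil explicitly from well-chosen multiples of $\eta_0$ and $\eta_\infty$, check by hand that $\Fo_1$ lands at $s=1$, compute the cross-ratio to be $1-s$ and reparametrize, and then for uniqueness you reduce an arbitrary pencil through $\Fo_0,\Fo_\infty$ to the form $\eta_0+tg\,\eta_\infty$ and show that membership of $\Fo_1$ forces $g$ to be the specific function that recovers your construction. The only remark is cosmetic: with the paper's cross-ratio convention one has $(\Fo_0,\Fo_0,\Fo_1,\Fo_\infty)=1$ and $(\Fo_1,\Fo_0,\Fo_1,\Fo_\infty)=0$, so in the final pencil the web foliations $\Fo_0$ and $\Fo_1$ sit at parameters $t=1$ and $t=0$ respectively; this is consistent with the proposition (which only asks that the three foliations occur as \emph{elements} of the pencil and that $\Fo_t$ be characterized by the cross-ratio identity), and your reparametrization handles it correctly.
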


Conversely, any Riccati foliation comes from a 3-web: it suffices to choose
3 elements of a pencil. In particular, any 4 elements of a pencil $\{\Fo_t\}_{t\in \Proj^1}$ have
constant cross-ratio.

In general, we can define a cross-ratio for a $k$-web, $k>3$, on a compact complex surface $M$.
Indeed, in a finite cover of $M$ any four foliations define a constant cross-ratio, and therefore they define a cross-ratio on $M$, thus giving a Riccati foliation on $M$.

The proposition below follows from the previous considerations.

\begin{prop}\label{kwebs}
If $M$ is compact and $\W$ is a regular $k$-web on $M$, $k\geq 3$, then there is a Riccati foliation $\Ho$ 
on $\Proj(TM)$ induced by $\W$ such that 
\[
\mon(\Ho)\subset \sym (1, \ldots, k). 
\]
\end{prop}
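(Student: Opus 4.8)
\textbf{Proof proposal for Proposition \ref{kwebs}.}

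The plan is to combine the cross-ratio machinery developed just above with the compactness conclusion from Proposition \ref{pencilRiccati} and Theorem \ref{flat}. First I would work locally: over a small polydisc $U\subset M$ the $k$-web $\W$ is given by $k$ pairwise transverse foliations $\Fo_1,\dots,\Fo_k$, $\Fo_i=[\partial_x+e_i(x,y)\partial_y]$ with the $e_i$ holomorphic and $e_i\neq e_j$ for $i\neq j$. Using the cross-ratio I would assign to $\W$ a single Riccati datum: pick coordinate $z=z_2/z_1$ on $\Proj(TU)$ and note that the graph of a foliation $[\partial_x+e(x,y)\partial_y]$ is the section $\{z=e(x,y)\}$; thus choosing (say) $\Fo_1,\Fo_2,\Fo_3$ we let $\Fo_t$ be the unique foliation with $(\Fo_t,\Fo_1,\Fo_2,\Fo_3)=t$, which by the Proposition above is the $t$-th member of the pencil they generate, and the graphs $\{S_t\}_{t\in\Proj^1}$ sweep out a Riccati foliation $\Ho_U$ on $\Proj(TU)$. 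The key point is that the leaves of $\Ho_U$ are precisely the graphs $S_{c}$ where $c$ runs over the (locally constant, since $\W$ is a $k$-web over a compact surface and any four of its foliations have constant cross-ratio in a finite cover) cross-ratio values; in particular the graphs $S_1,\dots,S_k$ of the original foliations are among the leaves.

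Second I would check that this local construction is intrinsic and glues to a global Riccati foliation $\Ho$ on $\Proj(TM)$. The cross-ratio $(\Fo_t,\Fo_0,\Fo_1,\Fo_\infty)$ is intrinsically attached to the four foliations, so the section $S_t$ does not depend on the choice of coordinates; the only choice is which three of the $\Fo_i$ we declare to be $\Fo_0,\Fo_1,\Fo_\infty$, and a different choice changes the parametrization of the pencil by a M\"obius transformation of $\Proj^1$, hence defines the same Riccati foliation (the same codimension-one foliation on the total space), exactly as in the pencil discussion above. Over overlaps the webs agree, so the associated Riccati foliations agree; passing to a finite cover if necessary to make the cross-ratios of arbitrary four members globally well defined (as already observed in the paragraph preceding the statement), these patch to a Riccati foliation $\Ho$ on $\Proj(TM)$, and since $M$ is compact this $\Ho$ is automatically Frobenius integrable and parallelizable by Theorem \ref{flat}.

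Third I would compute the monodromy. The monodromy representation $\rho_\Ho\colon\pi(M)\to\aut(\Proj^1)$ acts by lifting loops of $M$ to leaves of $\Ho$; since the leaves of $\Ho$ are exactly the graphs of the members $\Fo_c$ of the (locally defined) pencils, $\rho_\Ho(\gamma)$ permutes these graphs, and in particular it permutes the finite subset $\{S_1,\dots,S_k\}$ corresponding to the $k$ foliations of $\W$ — going around a loop in $M$ can only permute the $k$ branches of the web among themselves, because $\W$ is a globally defined (single-valued) $k$-web on $M$. An element of $\aut(\Proj^1)$ fixing $\Proj^1$ setwise and permuting the $k\geq 3$ points $\{S_1,\dots,S_k\}$ is determined by the induced permutation once $k\geq 3$ (three distinct points determine a M\"obius transformation), hence $\rho_\Ho(\gamma)$ is a well-defined element of $\sym(1,\dots,k)$; this gives $\mon(\Ho)=\im(\rho_\Ho)\subset\sym(1,\dots,k)$, as claimed.

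The main obstacle I expect is the global gluing and single-valuedness step: one must be careful that the cross-ratio of four members of the web is constant (so that the Riccati datum is well defined) and that the identification of leaves of $\Ho$ with web foliations is compatible under monodromy, which is precisely why one passes to a finite cover of $M$ and why the hypothesis $k\geq 3$ is needed (for $k=2$ there is no pencil to speak of, and for $k\geq 3$ three members pin down the M\"obius normalization). Once this is in place, the containment $\mon(\Ho)\subset\sym(1,\dots,k)$ is essentially a formal consequence of the leaves of $\Ho$ being graphs of foliations permuted by the fundamental group. Everything else reduces to the pencil/Riccati dictionary already established, so no new hard analysis is required.
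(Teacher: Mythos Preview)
Your proposal is correct and follows the same cross-ratio/pencil approach that the paper sketches in the paragraphs preceding the statement (the paper itself gives no further proof beyond ``follows from the previous considerations'' and references to \cite{Pereira}, \cite{FallaLoray}). One small slip: Theorem \ref{flat} gives parallelizability but not Frobenius integrability---the latter is already built into your construction, since $\Ho$ is defined locally as a foliation by the graphs $S_t$, so you need not invoke Theorem \ref{flat} for that.
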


See \cite{Pereira} and \cite{FallaLoray} for more details. 

\subsection{Calculations of the monodromy of a Ricca\-ti foliation}\label{calculations}
 What we will do is the following: calculate the Riccati connections on $M$ of zero curvature 
 and zero trace, and hence the space of Riccati foliations on $\Proj(TM)$.
If $M$ has at least one affine structure, the Riccati connections  
are the elements of $H^0(M,V)$, where $V=End(TM)\otimes T^*M$ 
(vector bundle, Higgs field)=:Higgs bundle. 
This follows from (\ref{equationdistribution}) since $g_{\alpha\beta}$, 
can be chosen constant so that $dg_{\alpha\beta}= 0$. 
Now pull back $V$ via the universal cover $\pi:\tilde M \rightarrow M$ of $M$, and compute 
the holomorphic sections of $\pi^*(V)$; those invariant by $\pi_1(M)$ will be the holomorphic 
sections of $V$. 

The universal cover $\tilde M$ of each of these surfaces is a subdomain of $\C^2$ and the
covering transformations are affine so that the standard flat holomorphic
linear connection on $\C^2$ restricted to $\tilde M\subset \C^2$ can be "pulled down"
to $M$. In this case, $M$ has a natural or usual affine
connection whose corresponding affine coordinates are those of $\tilde M\subset \C^2$
defined locally on $M$. Since $\pi^*(V)=\tilde M\times \C^8$, the Riccati connections 
on $\tilde M$ are of the form $\theta= A_1(x,y)dx +A_2(x,y)dy$ where $A_k(x,y)$ is a 
holomorphic $2\times 2$ matrix and $(x,y)$ are the global coordinates on 
$\tilde M\subset \C^2$. The Riccati connections on $M$ are the 2-forms $\theta$ that are invariant 
by $\pi_1(M)$. Using the structure equations (\ref{curvature}) and (\ref{trace}), the zero curvature condition becomes
\[
0=\frac{\partial A_2 }{\partial x}-\frac{\partial A_1}{\partial y}+[A_1,A_2], 
\]
and the relation $\Tr(\theta)=0$ turns into $\Tr(A_1)=\Tr(A_2)=0$.

\subsubsection{Complex torus surfaces}
In this case $M$ is the quotient of $\C^2$ by some lattice 
$\Gamma=\oplus_{i=1}^{4}(k_i,l_i)\Z$. Then  
$\theta= A_1dx +A_2dy$, where $A_1,A_2$ are complex $2\times 2$ matrices such that
\begin{equation}\label{conjugation}
A_1A_2=A_2A_1 \mbox{ and $\Tr(A_1)=\Tr(A_2)=0$.}
\end{equation}

The pairs of matrices $(A_1, A_2)$ are divided into conjugacy classes: $(A_1, A_2)$ is said to
be conjugate to $(B_1,B_2)$ iff there exists $C\in\gl(2,\C)$ such that 
$CB_1C^{-1}=A_1$ and $CB_2C^{-1}= A_2$.
Clearly, if we can find one member $(B_1,B_2)$ of a conjugacy class satisfying (\ref{conjugation}) then 
the other members $(A_1,A_2)$ are easily obtained. So we may assume that $B_1$, $B_2$ are in 
Jordan normal form, and in this case what we obtain is given in the listed below:
\[
\begin{tabular}{|l|l|l|l|}
\cline{1-4}
Type & $B_1=$ & $B_2=$  & Monodromy \\
\cline{1-4}
1 &  $\left(\begin{array}{cc}
-\frac{a}{2}&0\\
0&\frac{a}{2}
\end{array} \right)$  &
$\left(\begin{array}{cc}
-\frac{b}{2}&0\\
0&\frac{b}{2}
\end{array}\right)$  & $z\mapsto z\exp(ak_i+bl_i)$   \\
\cline{1-4}
2 & $\left(\begin{array}{cc}
0&1\\
0&0
\end{array} \right)$ &  
$\left(\begin{array}{cc}
0&c\\
0&0
\end{array}\right)$   &  
$z\mapsto z-(k_i+cl_i)$   \\
\cline{1-4}
3 &$\left(\begin{array}{cc}
0&0\\
0&0
\end{array} \right)$ &  
$\left(\begin{array}{cc}
0&1\\
0&0
\end{array}\right)$  & $z\mapsto z-l_i$ \\
\cline{1-4}  
\end{tabular}
\]
where $a,b,c\in \C$, $i=1,2,3,4$ and the corresponding monodromy 
(seen as a M\"obius transformation on one complex variable $z$) is also included.

We write $C=\left(\begin{array}{cc}
e&f\\
g&h
\end{array} \right)$. Then, in general, for $A_1$ and $A_2$ we have:

\[\tiny
\begin{tabular}{|l|l|l|}
\cline{1-3}
Type &  Riccati foliation $\Ho$ induced by $\theta$  & Monodromy \\
\cline{1-3}
1 &  $dz+\frac{1}{\det C}(g-ez)(-h+fz)(adx+bdy)$  & $z\mapsto z\exp(ak_i+bl_i)$   \\
\cline{1-3}
2 & $dz+\frac{1}{\det C}(g-ez)(-g+ez)(dx+c dy)$ &  
$z\mapsto \left\{\begin{array}{lc}
 z-\frac{g}{f}(k_i+cl_i), &\mbox{ if $e=0$},\\ 
 z-e(k_i+cl_i), & \mbox{ if $e\neq 0$}. 
\end{array}
\right.$\\
\cline{1-3}
3 & $dz+\frac{1}{\det C}(g-ez)(-g+ez)dy$  & 
$z\mapsto \left\{\begin{array}{lc}
z-\frac{g}{f}l_i,&\mbox{ if $e=0$},\\ 
z-el_i,& \mbox{ if $e\neq 0$}. 
\end{array}
\right.$   \\
\cline{1-3}
\end{tabular}
\]
where $a,b,c\in \C$ and $i=1,2,3,4$. 

From the list above we deduce the following lemma.

\begin{lem} \label{L:torustrivial}
Let $\Ho$ be a Riccati foliation on $\Proj(T(\C^2/\Gamma))$ induced by $\theta$. 
Then $\mon(\Ho)$ is trivial if and only if $\theta=0$, i.e., it is of type 1 with $a=b=0$. 
Furthermore, if $\mon(\Ho)$ is finite, then $\mon(\Ho)$ is trivial. 
\end{lem}

\begin{proof} 
It is enough to verify that if the monodromy is finite, then $\theta=0$. 
In fact, we can assume that $(k_3,l_3)=(1,0)$, $(k_4,l_4)=(0,1)$ and 
$\det(\im \tau)\neq 0$, where $\tau=\left(\begin{array}{cc}
k_1&k_2\\
l_1&l_2
\end{array}\right)$. 
So, $\Ho$ is of type 1 and $\mon(\Ho)$ is generated by $h_j(z)=z\exp(ak_j+bl_j)$ such that 
$h_j^n(z)=z$, for all $j=1,2,3,4$ and for some $n\in\N$. Thus we see $na=2\pi i r$ and $nb=2\pi is$, $r,s\in\Z$.  
Also, we have $rk_1+sl_1\in\Z$ and $rk_2+sl_2\in\Z$. Looking at the imaginary part of these 
numbers we deduce $\im\tau^{T}\left(\begin{array}{c}
r\\
s
\end{array}\right)=\left(\begin{array}{c}
0\\
0
\end{array}\right)$, so we get $r=s=0$ and therefore $a=b=0$.   
\end{proof}

\subsubsection{Hyperelliptic surfaces} 
A hyperelliptic surface, or bi-elliptic surface, is a surface that can be written as the quotient of a product 
of two elliptic curves by a finite abelian group. 
According to \cite[Theorem 4]{Bombieri}, there are seven types of hypereliptic surfaces that can written  
as $M=E\times F/G$, where $E$, $F$ elliptic curves and $G$ is finite abelian subgroup of $E$. Therefore 
we can also calculate the monodromy of Riccati foliations on $\Proj (TM)$, and these are given in the 
following table.
 \[
\begin{tabular}{|l|l|l|}
\cline{1-3}
$G$ & Action of $G$ &  Monodromy \\
\cline{1-3}
$\Z_2$ &  $(x,y)\to(x+a,-y)$& $z\mapsto -z$  \\
\cline{1-3}
$\Z_2\oplus\Z_2$ &  $(x,y)\to(x+a,-y)$, $(x+b,y+c)$, $c=-c$& $z\mapsto -z$ \\  
\cline{1-3}
$\Z_3$ &  $(x,y)\to(x+a,\nu y)$  & $z\mapsto \nu z$\\
\cline{1-3}
$\Z_3\oplus \Z_3$ &  $(x,y)\to(x+a,\nu y)$, $(x+b,x+c)$, $\nu c=c$& $z\mapsto \nu z$\\
\cline{1-3}
$\Z_4$ &  $(x,y)\to(x+a,iy)$  & $z\mapsto iz$\\
\cline{1-3}
$\Z_2\oplus \Z_4$ &  $(x,y)\to(x+a,iy)$, $(x+b,y+c)$, $ic=c$&  $z\mapsto iz$ \\
\cline{1-3}
$\Z_6$ &  $(x,y)\to(x+a,-\nu y)$ & $z\mapsto-\nu z$ \\
\cline{1-3}  
\end{tabular}
\] 
where $\nu$ is a primitive cube root of 1. The following lemma is a straightforward consequence 
of the above table.      
\begin{lem}\label{L:hyperfinite}
Let $\Ho$ be a Riccati foliation on $\Proj (T(E\times F/G))$. Then 
$\mon(\Ho)$ is non-trivial, finite and cyclic. 
In particular, there are no regular pencils on $E\times F/G$ and there exists regular $k$-webs on $E\times F/G$, 
for $k\geq 3$.
\end{lem}

\subsubsection{Primary Kodaira surfaces}
A primary Kodaira surface $K$ is the quotient of $\C^2$ by some group $G$ generated by
\begin{align*}
g_1(x, y) = (x,y+1),  & &g_3(x, y) = (x+1,ax+y), \\
 g_2(x, y) = (x,y+\tau_1), & &g_4(x, y) = (x+\tau_2,bx+y), 
\end{align*}
where $\tau_1,\tau_2$ are generators of the fundamental domain of the modular group and $a,b\in \C$ satisfy $a\tau_2-b=m\tau_1$ for some positive integer $m$. See more details in \cite{Vitter}.

A Riccati connection on $K$ has the form
\[
\theta= \left(\begin{array}{cc}
e&0\\
c&h
\end{array} \right)dx+\left(\begin{array}{cc}
0&0\\
e-h&0
\end{array} \right)dy,
\]
for some constants $e$, $c$, $h$. Therefore the Riccati foliations on $\Proj(TK)$ 
are of the form $dz+cdx$, $c\in \C$. 
So, the monodromy group $\mon$ is generated by $f_1(z) =z-a-c$, and  
$f_2(z) = z-b-c\tau_2$. 
From this we can conclude that there are no regular pencils $\Po$ and  regular 
$k$-webs $\W$ on a primary Kodaira surface, for $k\geq 3$.

\subsubsection{Hopf surfaces}
A compact complex surface $M$ whose universal covering space is biholomorphic to    
$\C^2-\{0\}$ is called a Hopf surface.

{\it 1. Primary Hopf surfaces}. We consider first the primary Hopf surfaces, which are quotients of 
$\C^2-\{0\}$ by the infinite
cyclic group generated by an automorphism $g$ of $\C^2-\{0\}$. According to 
\cite[Part II]{Kodaira}, § 10, $g$ has the form:
\[
g(x, y) = (ax+\lambda y^m, by),  
\]
for some positive integer $m$ and some complex numbers $a$,$b$, $\lambda$ 
with $0<|a|\leq |b|< 1$ and $(a-b^m)\lambda=0$. 

By using \cite[(7.5) Theorem]{Kob1} we have:
\[
\begin{tabular}{|l|l|l|}
\cline{1-3}
Condition & Riccati foliation $\Ho$ induced by $\theta$  & Monodromy\\ 
\cline{1-3}
 $\lambda\neq 0$, $m=1$ &  $dz$  & $z\mapsto \frac{zb+\lambda}{a}$   \\
 \cline{1-3}
$\lambda=0$, $a\neq b^2$ & $dz$ &  
$z\mapsto \frac{zb}{a}$ \\
\cline{1-3}
$\lambda=0$, $a=b^2$ & $dz-cz^2dy$  & 
$z\mapsto \frac{z}{b}$  \\
\cline{1-3}
\end{tabular}
\]
where $c\in \C$.  

From the above list we deduce directly the following.

\begin{lem} Let $M$ be a primary Hopf surface and $\Ho$ be a Riccati foliation on $\Proj(TM)$. 
\begin{enumerate}
\item  $\mon(\Ho)$ is trivial if and only if $\theta=0$,i.e., $\lambda=0$ and $a=b$ . 
\item $\Ho$ is induced by a regular $k$-web, $k\geq 3$, if and only if $\lambda=0$ and 
$a=\xi b$, where $\xi$ is a primitive $j$-th root of 1, for some integer $j\geq 1 $. 
\end{enumerate}
\end{lem}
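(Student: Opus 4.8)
The strategy is to work entirely on the universal cover $\tilde M = \C^2 - \{0\}$ and to translate the two stated conditions into properties of the generating monodromy M\"obius transformation, using the table established just above. By the computations in Section~\ref{calculations}, a Riccati foliation on $\Proj(TM)$ lifts to a matrix of $1$-forms $\theta = A_1\,dx + A_2\,dy$ with $\Tr(A_1)=\Tr(A_2)=0$, zero curvature, and invariance under the cyclic group $\langle g\rangle$; by \cite[(7.5) Theorem]{Kob1} the only surviving $\theta$ in each of the three regimes for $(a,b,\lambda)$ are the ones listed, and their associated monodromy generators are the M\"obius maps recorded in the last column of the table. So the proof reduces to reading off, in each of the three cases, when $\theta = 0$ and when the monodromy generator has finite order of a prescribed type.

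For item (1): the monodromy is trivial precisely when the single generator of $\mon(\Ho)$ is the identity M\"obius transformation. In the first case the generator is $z\mapsto (zb+\lambda)/a$, which is the identity iff $\lambda = 0$ and $a = b$, but $\lambda\neq 0$ is assumed, so this never happens. In the second case the generator is $z\mapsto zb/a$, the identity iff $a = b$; combined with $\lambda = 0$ this is the condition claimed. In the third case ($\lambda = 0$, $a = b^2$) the generator is $z\mapsto z/b$, which is the identity iff $b = 1$, hence $a = 1$ as well, so again $a = b$ — but here one must also check that the remaining free parameter $c$ is forced to vanish: since $a = b = 1$ forces $a = b^2$ trivially, the foliation is $dz - cz^2\,dy$, and one argues that triviality of the monodromy together with $\dive$-type invariance (or simply invariance under $g$, which acts as $(x,y)\mapsto(x+\lambda y^m, y)$ collapsing to $(x,y)\mapsto(x,y)$ only in degenerate cases) forces $c = 0$; more directly, $\theta = 0$ means $c = 0$ and also $e = h$, so the statement $\theta = 0 \iff \lambda = 0$ and $a = b$ is what has to be reconciled across all three rows. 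I would handle this by noting that $\theta = 0$ is equivalent to the matrix data $(e, c, h, \ldots)$ all vanishing, which by inspection of the three normal forms happens exactly in the $\lambda = 0$, $a = b$ row, and in that row the monodromy generators both degenerate to the identity.

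For item (2): a Riccati foliation is induced by a regular $k$-web, $k\geq 3$, exactly when its monodromy group is a nontrivial finite subgroup of $\aut(\Proj^1)$ that embeds into $\sym(1,\dots,k)$ — and since $\mon(\Ho)$ is here cyclic (generated by one element), this means the generator is an elliptic M\"obius transformation of finite order $j$ with $2 \le j \le k$ (combine Proposition~\ref{kwebs} with Proposition~\ref{pencilRiccati}, noting that order $1$ is the trivial, pencil case and is excluded once we ask for a genuine $k$-web rather than a pencil; I should be careful to phrase the bound as $1\le j\le k$ to include the web-from-pencil degeneration the statement actually asserts). In case one ($\lambda\neq 0$), the generator $z\mapsto(zb+\lambda)/a$ has a single fixed point $z = \lambda/(a-b)$ when $a\neq b$ (a parabolic-or-loxodromic type with multiplier $b/a$) and no finite order unless it is already the identity, which is excluded; when $a = b$ it is a nontrivial translation, again of infinite order — so no $k$-web arises, consistent with requiring $\lambda = 0$. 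In case two ($\lambda = 0$, $a\neq b^2$), the generator $z\mapsto (b/a)z$ fixes $0$ and $\infty$ and has finite order $j$ iff $(b/a)^j = 1$, i.e.\ $b/a = \xi^{-1}$ for a primitive $j$-th root of unity $\xi$, equivalently $a = \xi b$; one checks $a\neq b^2$ is automatically compatible (it only excludes the coincidence $\xi = b$, a codimension condition one can absorb or note is generic) and that $j$ can be taken in $1\le j\le k$. In case three ($\lambda = 0$, $a = b^2$), the generator $z\mapsto z/b$ has finite order iff $b$ is a root of unity; writing $b^{-1}$ as a primitive $j$-th root of unity and using $a = b^2$ gives $a = \xi^{-2}$ while $b = \xi^{-1}$, so $a = b^2$ is consistent and again $a = \xi' b$ for the appropriate root $\xi'$; moreover one must verify that the extra parameter $c$ does not obstruct finiteness of the monodromy — but $c$ only scales the leaf $dz - cz^2\,dy$ and does not enter the monodromy generator $z\mapsto z/b$ at all (as the table shows), so it is harmless. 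Assembling the three cases, $\Ho$ comes from a regular $k$-web iff $\lambda = 0$ and $a = \xi b$ with $\xi$ a primitive $j$-th root of unity, $1\le j\le k$, which is the assertion.

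The main obstacle, and the step I would spend the most care on, is the bookkeeping in item (2): matching the three case-by-case multiplier conditions ($a = b$ translation, $b/a$ a root of unity, $b$ a root of unity) into the single uniform statement "$a = \xi b$ with $\xi$ primitive $j$-th root of unity, $1\le j\le k$," while correctly tracking the side conditions $a\neq b^2$ versus $a = b^2$ that separate cases two and three and making sure no valid web is lost or double-counted at the boundary $a = b^2$. The curvature and trace computations themselves, and the identification of $\mon(\Ho)$ with the cyclic group generated by the single listed M\"obius map, are routine given the table and the general theory of Section~\ref{section3}, so the entire proof is essentially a careful reading of the table combined with the elementary classification of finite-order M\"obius transformations.
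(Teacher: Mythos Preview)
Your approach is the same as the paper's---the lemma is meant to be read off directly from the table---but there is a real gap in your reading: you never invoke the defining inequalities $0<|a|\le |b|<1$ of a primary Hopf surface (nor the side condition $(a-b^m)\lambda=0$), and without them your case analysis goes astray.

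Concretely: in the third row ($\lambda=0$, $a=b^2$) the generator $z\mapsto z/b$ has finite order iff $b$ is a root of unity, which is impossible since $|b|<1$. Hence this row never contributes finite (in particular never trivial) monodromy, and the whole discussion you give of case three in both items---forcing $c=0$, rewriting $a=b^2$ as $a=\xi' b$, worrying whether $c$ obstructs finiteness---is vacuous. Likewise in the first row, $\lambda\neq 0$ and $m=1$ together with $(a-b^m)\lambda=0$ force $a=b$, so the generator is automatically the nontrivial translation $z\mapsto z+\lambda/a$; your $a\neq b$ sub-case there is empty. Once these two observations are made, only the second row can yield finite monodromy, and there the condition is simply that $b/a$ be a primitive $j$-th root of unity, i.e.\ $a=\xi b$. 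The side condition $a\neq b^2$ is then automatic (it would require $\xi=b$, contradicting $|\xi|=1\neq|b|$), so it is not a ``codimension condition one can absorb'' as you write, but a consequence of $|b|<1$. With these constraints in hand the ``bookkeeping obstacle'' you flag disappears and the lemma really is immediate from the table.
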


{\it 2. Secondary Hopf surfaces}. A secondary Hopf surface $M$ is the quotient of $\C^2-\{0\}$ 
by the free action of a group $\Gamma$ containing a central, finite index subgroup generated by an 
automorphism $g$ of the above type. The primary Hopf surface $N = \C^2-\{0\}/g$ is 
a finite \'etale cover of $M$ and the corresponding finite subgroup is generated by 
$e(x, y) = (\epsilon_1x, \epsilon_2y)$, 
where $\epsilon_1$ and $\epsilon_2$ are primitive $l$-th roots of unity and 
$(\epsilon_1-\epsilon_2^m)\lambda=0$. Then we have:
\[
\begin{tabular}{|l|l|l|}
\cline{1-3}
Condition & Riccati foliation $\Ho$ induced by $\theta$  & Monodromy \\
\cline{1-3}
 $\lambda\neq 0$, $m=1$ &  $dz$  & $z\mapsto \frac{z\epsilon_2}{\epsilon_1}$   \\
\cline{1-3}
$\lambda=0$, $\epsilon_1\neq \epsilon_2^2$ & $dz$ &  $z\mapsto \frac{z\epsilon_2}{\epsilon_1}$  \\
\cline{1-3}
$\lambda=0$, $\epsilon_1=\epsilon_2^2$ & $dz-cz^2dy$  & $z\mapsto \frac{z}{\epsilon_2}$  \\
\cline{1-3}
\end{tabular}
\]
where $c\in \C$. 

Hopf surfaces with fundamental group isomorphic to $\Z\oplus \Z / l\Z $, where $l\geq1 $ is an integer, are 
generated by two diagonal automorphisms $g(x, y)=(ax, ay) $ with $0 <|a|<1$, and 
$e( x, y)=(\epsilon x, \epsilon y)$ where $\epsilon$ is a primitive $l$-th root of 1. We denote them by $H_{a, l}$. 
From these two previous lists we conclude the following.

\begin{lem} \label{L:hopffinite}
Let $M$ be a Hopf surface and $\Ho$ be a Riccati foliation on $\Proj(TM)$. 
\begin{enumerate}
\item  $\mon(\Ho)$ is trivial if and only if $M=H_{a,l}$. 
\item $\Ho$ is induced by a regular $k$-web, $k\geq 3$, if and only if $H_{a,1}$ is a finite cover of $M$.
\item If $\mon (\Ho)$ is finite, then $\mon(\Ho)$  is cyclic.
\end{enumerate}
\end{lem}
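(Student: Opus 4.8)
The plan is to verify each of the three claimed equivalences by direct inspection of the monodromy lists assembled for primary and secondary Hopf surfaces, together with the description of Hopf surfaces with abelian fundamental group $H_{a,l}$. The overarching strategy is: a Riccati foliation $\Ho$ on $\Proj(TM)$ for a Hopf surface $M$ pulls back to a Riccati foliation on the primary Hopf cover $N=(\C^2\setminus\{0\})/g$, and by the computations above $\theta$ is forced into one of the normal forms tabulated; the monodromy group of $\Ho$ on $M$ is then the group generated by the M\"obius transformations attached to all the generators of $\pi_1(M)$, which we read off from the two tables.

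For item 1: if $M=H_{a,l}$ then $g(x,y)=(ax,ay)$ and $e(x,y)=(\epsilon x,\epsilon y)$, so in every row of the appropriate table the multipliers coincide ($b=a$, $\epsilon_1=\epsilon_2$), forcing $a/b=1$ and $\epsilon_1/\epsilon_2=1$; moreover $\lambda=0$ is automatic since the off-diagonal term is absent, and one checks the $z^2$-coefficient also vanishes (the case $a=b^2$ would require $a=1$, impossible as $|a|<1$). Hence each generator of $\mon(\Ho)$ acts as the identity. Conversely, if $\mon(\Ho)=\{\id\}$ then in particular the generator coming from $g$ acts trivially; going through the rows this forces $b/a=1$ and $\lambda=0$ (for the middle row), i.e. $a=b$ with no $\lambda$ term, and the action of $e$ then forces $\epsilon_1=\epsilon_2$, so $M$ is exactly an $H_{a,l}$. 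This uses the previous lemmas (the primary/secondary Hopf lemmas) as the source of the normal forms.

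For item 2: by Proposition~\ref{kwebs}, $\Ho$ being induced by a regular $k$-web on a compact surface forces $\mon(\Ho)\subset\sym(1,\ldots,k)$, hence $\mon(\Ho)$ is a finite group of M\"obius transformations embedding in $S_k$; conversely by Proposition~\ref{pencilRiccati} and the remark following it, any Riccati foliation with monodromy contained in $\sym(1,\ldots,k)$ comes from choosing $k$ elements of the underlying pencil, so a $k$-web. So the content is: $\mon(\Ho)$ is finite of order $\le k$ $\iff$ $H_{a,1}$ finitely covers $M$. If $H_{a,1}$ covers $M$, then $\Ho$ pulls back to the primary Hopf surface $H_{a,1}$ where, by item 1 applied with $l=1$, the pulled-back monodromy is trivial; hence $\mon(\Ho)$ is a quotient of a finite group, so finite — and finer: by the secondary-Hopf table it is generated by $z\mapsto z\epsilon_2/\epsilon_1$ (or $z\mapsto z/\epsilon_2$), a root of unity rotation, so it embeds in a cyclic group of order dividing some $j\le k$. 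Conversely, if $\mon(\Ho)$ is finite, then by the tables the generators act by scalar multiplications by roots of unity on $z$ (or are unipotent with finite order, hence trivial); passing to the subgroup where all these scalars become $1$ gives a finite-index subgroup over which $\theta=0$, i.e. a finite étale cover of $M$ which is some $H_{a,l}$, and then a further cover trivializing the $l$-torsion is $H_{a,1}$.

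For item 3: this is really a corollary of the bookkeeping in items 1–2. By the tables, whenever $\mon(\Ho)$ is finite, its generators are simultaneously diagonalizable M\"obius transformations $z\mapsto \zeta_i z$ with $\zeta_i$ roots of unity (the unipotent cases give the identity), hence they commute and generate a cyclic group; the one point needing care is that the diagonal forms across different generators share a common fixed pair $\{0,\infty\}$ in the coordinate $z$, which is exactly what the normal form $dz$ (resp. $dz-cz^2dy$ with $c=0$ in the finite case) delivers. The main obstacle I anticipate is \textbf{ruling out the unipotent monodromy contributing a non-cyclic finite group}: one must confirm that in every finite case the $z^2$-coefficient $c$ must vanish (otherwise the row-3 normal form $dz-cz^2dy$ yields a monodromy generator with a single fixed point, which cannot sit inside a finite abelian group together with a semisimple rotation unless it is the identity), so $c=0$ whenever $\mon(\Ho)$ is finite; granting that, cyclicity is immediate. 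All three statements then follow by reading the tables, so the proof is essentially a case check once the normal forms from the preceding two Hopf lemmas are in hand.
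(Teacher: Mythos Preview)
Your approach is the same as the paper's: the lemma is stated immediately after the two monodromy tables with the sentence ``From these two previous lists we conclude the following,'' so the intended proof is precisely the case-by-case reading you carry out. Your elaboration is essentially correct, with two small points worth tightening.

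First, in item~3 you anticipate that the row-3 normal form $dz-cz^2\,dy$ produces a monodromy generator with a single fixed point. That is not what the tables say: the listed monodromy in row~3 is the scaling $z\mapsto z/b$ (primary) or $z\mapsto z/\epsilon_2$ (secondary), regardless of $c$. The correct reason row~3 is excluded from the finite-monodromy case is simply that $|b|<1$, so $z\mapsto z/b$ has infinite order; similarly row~1 is excluded because $(a-b)\lambda=0$ with $\lambda\neq0$ forces $a=b$, giving the genuine translation $z\mapsto z+\lambda/a$. Once you are in row~2 for both $g$ and $e$, all generators are scalings $z\mapsto\zeta z$ and the group is a finite subgroup of $\C^*$, hence cyclic.

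Second, in item~2 your reformulation ``$\mon(\Ho)$ is finite of order $\le k$'' is not quite right (a $k$-web gives monodromy in $S_k$, so order dividing $k!$), and the converse step---producing a $k$-web from a finite cyclic monodromy---deserves one explicit sentence: if $\mon(\Ho)=\langle z\mapsto\zeta z\rangle$ has order $j$, any generic orbit in $\Proj^1$ has size $j$, and together with the fixed points $0,\infty$ one obtains invariant subsets of every size $\ge j$ (and size $3$ when $j\le2$), hence a regular $k$-web for some $k\ge3$.
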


\subsubsection{Inoue surfaces}

Inoue surfaces are compact complex surfaces of type $VII_0$, see \cite{BPV} for more details. 
In \cite{Inoue} Inoue shows that these surfaces are obtained as quotients of $\Hip\times \C$ by a group 
$\Gamma$ of affine transformations of $\C^2$ preserving the open set $\Hip$ ($\Hip$ being the Poincar\'e upper half-plane). In particular, each Inoue surface inherits an affine structure induced by the canonical 
affine structure of $\C^2$, which is unique by \cite[Lemma 4.3]{Klingler}. Up to a double 
unramified cover, Inoue surfaces are obtained by one of the following two procedures \cite{Inoue}.

1. {\it Surfaces $S_M$}. Consider a matrix $M\in \SL(3,Z)$ with eigenvalues 
$\alpha, \beta, \overline{\beta}$ such that $\alpha> 1$ and $\beta\neq \overline{\beta}$. 
Choose a real eigenvector $(a_1, a_2, a_3)$  associated with the eigenvalue $\alpha$ and an eigenvector 
$(b_1, b_2, b_3)$ associated with the eigenvalue $\beta$. Consider also the group $\Gamma$ of (affines) transformations of $\C^2$ generated by:

\begin{align*}
\gamma_0(x ,y) &= (\alpha x, \beta y),  \\
\gamma_i(x, y) &= (x + a_i, y + b_i), \mbox{ with $i = 1, 2, 3$.}
\end{align*}

The action of $\Gamma$ on $\C^2$ preserves $\Hip \times\C$ and the quotient is the compact  complex surface $S_M$.

In this case, there is only one Riccati foliation on $\Proj(TS_M)$  and it has the form 
$dz$. Thus, the monodromy group is generated by $f(z) =\frac{\beta}{\alpha}z$.

2. {\it Surfaces $S^+_{N,p,q,r,t}$}. Let $N = (n_{ij} )\in\SL(2,\Z)$ be a diagonalizable matrix
on $\R$ with eigenvalues $\alpha > 1$ and $\alpha^{-1}$ 
and eigenvectors $(a_1, a_2)$ and $(b_1, b_2)$ 
respectively. Choose $r\in\Z^*$, 
$p$, $q\in \Z$, $t\in\C$ and real solutions $c_1,c_2$ of the equation
\[
(c_1, c_2) = (c_1, c_2)N^t + (e_1, e_2) + \frac{1}{r}(b_1a_2-b_2a_1)(p, q),
\]
where $e_i =\frac{1}{2} n_{i1}(n_{i1}-1)a_1b_1+\frac{1}{2}n_{i2}(n_{i2}-1)a_2b_2+n_{i1}n_{i2}b_1a_2$ and $N^t$ denotes the transpose of $N$.

In this case $\Gamma$ is generated by the transformations
\begin{align*}
\gamma_0(x, y) &= (\alpha x, y + t),\\
\gamma_i(x, y) &= (x + a_i, y + b_ix + c_i),\,\, (i = 1, 2),\\
\gamma_3(x, y) &= (x, y + r^{-1}(b_1a_2-b_2a_1)).
\end{align*}
This group is discrete and acts properly and discontinuously on $\Hip \times\C$ 
and the quotient we obtain is the compact complex surface $S^+_{N,p,q,r,t}$.

Also in this case, there is only one Riccati foliation on $\Proj(TS^+_{N,p,q,r,t})$ which has
the form $dz$. Moreover, the monodromy group is generated by
$f_0(z) =\frac{z}{\alpha}$ and $f_i(z) =\frac{z}{1+b_iz}$, $i=1,2$. 

Thus we conclude that there are no regular pencils $\Po$ nor regular $k$-webs $\W$ on 
a Inoue surface, for $k\geq 3$.

 \subsubsection{ Elliptic surfaces over a Riemann surface of genus $g\geq 2$, with 
 odd first Betti number.}

The existence of a holomorphic affine structure on a elliptic surface over a Riemann 
surface of genus $g\geq 2$, of odd first Betti number, is a result due to Maehara 
\cite{Maehara}. The global geometry of affine holomorphic structures and holomorphic affine 
connections wi\-thout torsion on these surfaces are studied in \cite{Klingler} and \cite{Dumitrescu} 
respectively.

Up to a finite covering and a finite quotient, this surface $M$ is constructed as follows. 
Let $\Gamma$ be a discrete torsion-free subgroup of $\psl (2, \R)$ such that 
$\Sigma=\Gamma \setminus \Hip$, where
$\Hip$ denotes the Poincar\'e half-plane. If we think of $\Gamma$ as a subgroup of $\SL (2, \R)$,  
then the action of an element 
$\gamma=\left(\begin{array}{cc}
a &b\\
c &d 
\end{array}\right)
\in \SL (2, \R)$ of $\Gamma$ on $\C\times\Hip$ is given by the following formula: 
$\gamma(x,y) = (x + log (cy + d), \gamma y)$, for all $(x,y)\in\C\times\Hip$, where 
 $\log$ denotes a branch of the logarithm function and the action of $\gamma$ on $\Hip$ comes from the canonical action of $\SL(2,\R)$ on $\Hip$. See \cite{Klingler} for more details.

The quotient of $\C\times\Hip$ by the action of $\Gamma$ is the complex 
surface compact $M$.

A torsion-free flat connection on $M$ has the form
\[
\theta= \left(\begin{array}{cc}
dx&0\\
dy&dx
\end{array} \right)+\left(\begin{array}{cc}
0&f(y)dy\\
0&h(y)dy
\end{array} \right),
\]
where $h(\gamma y)=h(y)(cy+d)^2$
and $f(\gamma y)=f(y)(cy+d)^4+ch(y)(cy+d)^3$, 
for all $\gamma=\left(\begin{array}{cc}
a &b\\
c &d 
\end{array}\right)
\in \Gamma$. Therefore the Riccati foliations on $\Proj(TM)$ 
are of the form 
\begin{equation}\label{eq:Riccatielliptic}
\omega=dz+dy+h(y)dyz-f(y)dyz^2.
\end{equation} 

In this case we did not succeed in calculating the monodromy but we prove the following lemma.

\begin{lem} There are no regular pencils $\Po$ on $M$. In particular, there are 
no regular $k$-webs $\W$ on $M$, for $k\geq 3$.
\end{lem}
\begin{proof}
Suppose that there exists a regular pencil $\Po=\{\Fo_{t}\}_{t\in\Proj^1}$ on $M$. 
By \cite[Lemma 4.3]{Puchuri} we may assume that $\Fo_{\infty}$ is the foliation tangent 
to the elliptic fibration on $M$. Let $\pi:\C\times\Hip\to M$ be the universal covering and  
$\tilde{\Po}=\pi^*\Po=\{\tilde{\Fo}_{t}\}_{t\in\Proj^1}$ be the pencil on $\C\times\Hip$. 
Since $\tilde\Fo_0$ and $\tilde\Fo_{\infty}$ are transversal we have 
$\tilde\Fo_0=\{dx+B(x,y)dy=0\}$ and $\tilde\Fo_{\infty}=\{dy=0\}$, $B\in\Ol(\C\times \Hip)$. 
So, $\tilde{\Po}$ is induced by $\omega_t=dx+B(x,y)dy+tA(x,y)dy$, for some $A\in\Ol^*(\C\times \Hip)$. 
Thus the Riccati foliation is induced by 
$\omega=dz+\frac{dA}{A}z+(B\frac{dA}{A}-dB)z^2$, which is in contradiction with 
the equation (\ref{eq:Riccatielliptic}).   
\end{proof}

\section{Applications}\label{section4} 

The following theorem was proved in \cite[Theorem 4.6]{Puchuri} by using foliation techniques. 
\begin{teo} \label{pencilflat}
If $M$ is compact and $\Po$ is a re\-gu\-lar pencil on $M$, then $M$ is either 
a complex torus $M=\C^2/\Gamma$ or a Hopf surface $M=H_{a,l}$.  

Moreover, $\Po$ is the only regular pencil on $M$, and it is generated by
$\{dx+tdy\}_{t\in\Proj^1}$ on $\C^2$ (in the first case) or on 
$\C^2\setminus \{(0,0)\}$ (in the second case). 
\end{teo}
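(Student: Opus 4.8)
The strategy is to convert the pencil $\Po$ into a Riccati object via the machinery of Section \ref{section3} and then read off the answer from the classification of affine compact complex surfaces together with the explicit monodromy computations of Subsection \ref{calculations}. First I would invoke Proposition \ref{pencilRiccati}: a regular pencil of foliations on $M$ is the same data as a Riccati foliation $\Ho$ on $\Proj(TM)$ with trivial monodromy, $\mon(\Ho)=\{\id\}$. Next, by the last corollary of Subsection 3.3 (equivalently, Theorem \ref{flat} combined with Theorem \ref{Riccatiaffineconnection}), the existence of a Riccati foliation on $\Proj(TM)$ for $M$ compact is equivalent to $M$ carrying an affine structure. Hence $M$ must be one of the surfaces on the Inoue--Kobayashi--Ochiai list: up to finite covering, a complex torus, a primary Kodaira surface, an affine Hopf surface, an Inoue surface, or an elliptic surface over a curve of genus $g\geq 2$ with odd first Betti number.

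Now I would go through that list and eliminate every case in which no Riccati foliation has trivial monodromy, using the tables of Subsection \ref{calculations}. For primary Kodaira surfaces the monodromy is generated by $f_1(z)=z-a-c$ and $f_2(z)=z-b-c\tau_2$ and, as noted there, is never trivial; Inoue surfaces likewise admit no Riccati foliation with $\mon=\{\id\}$ (the generators $f_0(z)=z/\alpha$, $f_i(z)=z/(1+b_iz)$, resp. $f(z)=\tfrac{\beta}{\alpha}z$, are nontrivial); and for the elliptic surfaces of Maehara type it follows from Puchuri's result cited at the end of Section \ref{section3} that there are no regular pencils. That leaves complex tori and Hopf surfaces. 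For $M=\C^2/\Gamma$, the lemma in Subsection \ref{calculations} states that $\mon(\Ho)$ is trivial if and only if $\theta=0$, i.e. $a=b=0$, which forces $\Ho:[dz=0]$; pulling back to $\C^2$ this is exactly the Riccati foliation attached to the pencil $\{dx+tdy\}_{t\in\Proj^1}$. For Hopf surfaces, the Hopf-surface lemma says $\mon(\Ho)$ is trivial precisely when $M=H_{a,l}$, and again the only such $\Ho$ is $[dz=0]$, corresponding on $\C^2\setminus\{(0,0)\}$ to the pencil $\{dx+tdy\}_{t\in\Proj^1}$. In both cases the Riccati foliation $[dz=0]$ is unique with trivial monodromy, so by Proposition \ref{pencilRiccati} the pencil $\Po$ is unique and has the asserted normal form.

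One technical point to address carefully is the passage "up to a finite covering": a regular pencil on $M$ pulls back to a regular pencil on any finite étale cover $\widetilde M$, and by Proposition \ref{pencilRiccati} the induced Riccati foliation downstairs has trivial monodromy iff the one upstairs does, so the classification is unaffected by replacing $M$ by a finite cover; conversely, once $\widetilde M$ is a torus or an affine Hopf surface with the flat pencil $\{dx+tdy\}$, the deck group acts affinely and preserves this pencil (its monodromy is trivial on $\widetilde M$, hence on $M$), so $M$ itself is of the claimed type carrying the same pencil. The main obstacle is not any single computation but making sure the case-by-case elimination is exhaustive and that uniqueness is genuinely pinned down: one must check that in each of the torus and Hopf cases the \emph{only} Riccati foliation with trivial monodromy is $[dz=0]$ (this is exactly what the lemmas of Subsection \ref{calculations} provide), and that the cited results of Puchuri dispose of the elliptic case, since there the monodromy could not be computed directly.
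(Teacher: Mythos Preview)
Your proposal is correct and follows essentially the same route as the paper: pass from the pencil to a Riccati foliation with trivial monodromy via Proposition \ref{pencilRiccati}, then run through the Inoue--Kobayashi--Ochiai list using the explicit monodromy computations of Subsection \ref{calculations}, invoking Puchuri's result \cite{Puchuri} to dispose of the elliptic case. The paper's proof is much terser but has the same content, and it likewise flags the elliptic case as lying outside the monodromy argument and resting on \cite{Puchuri}; your additional care with the finite-cover issue and the uniqueness of $[dz=0]$ simply makes explicit what the paper packages into the lemmas of Subsection \ref{calculations}.
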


\begin{proof}
Let $\Ho$ be the Riccati foliation on $\Proj(TM)$ induced by $\Po$. By Proposition 
\ref{pencilRiccati} we have that $Mon(\Ho)$ is trivial. Using the calculations of the  
monodromy from Subsection \ref{calculations}, see Lemmas \ref{L:torustrivial} and 
\ref{L:hopffinite}, we conclude the proof. 
\end{proof}

\begin{teo} \label{T:Riccatifinite} 
If $M$ is compact and $\Ho$ is a Riccati foliation on $\Proj(TM)$ with finite monodromy,     
then $M$ is either a complex torus $\C^2/\Gamma$, a hyperelliptic surface, or 
a Hopf surface. 

Moreover, in the first case $\mon(\Ho)$ is trivial, while in the last 
two cases it is cyclic.   
\end{teo}

\begin{proof}
This follows readily from the procedure we employed when computing monodromy in the 
previous subsection, see Lemma \ref{L:torustrivial}, Lemma \ref{L:hyperfinite} and Lemma \ref{L:hopffinite}.    

\end{proof} 

\begin{teo}
If $M$ is compact and $\W$ is a regular $k$-web on $M$, $k\geq 3$,   
then $M$ is either a complex torus, a hyperelliptic surface, or, up to a finite unramified cover, 
the Hopf surface $H_{a,l}$. 

Moreover, $\W\subset \Po$ up to a finite unramified cover, where $\Po$ is 
the only pencil on $M$.    
\end{teo}

\begin{proof}
By Proposition \ref{kwebs}, there is a Riccati foliation $\Ho$ on $\Proj(TM)$ induced by $\W$
with finite monodromy. So we can apply Theorem \ref{T:Riccatifinite}.
Now up to a finite unramified cover we can suppose that the monodromy is trivial and 
$\W = \Fo_1\boxtimes  \cdots \boxtimes \Fo_{k}$ is a 
(completely decomposable) regular $k$-web on $M$. 
To complete the proof we apply Theorem \ref{pencilflat} and proceed as we did when 
computing monodromy in Subsection \ref{calculations}.
\end{proof}

Finally, we point out that the classification of regular $2$-webs follows directly from
\cite[Theorem C]{Beauville}.

%%%%%%%%%%%%%%%%%%%%%%%%%%%%%%%%%%%%%%%%%%%%%%%%%%%%%%%%%%%%%%%%%%%%%%%%

\section*{Acknowledgements}
The author wishes to express his deepest gratitude to Frank Loray for lots of fruitful discussions 
about the content of this paper, and also for reading the drafts and suggesting improvements.
 The author also wishes to thank Cesar Hilario for the suggestions and comments on the manuscript.
 The author acknowledge financial support from CAPES/COFECUB 
 (Ma 932/19 "Feuilletages holomorphes et int\'eration avec la g\'eom\'etrie" / process number 88887.356980/2019-00). 
The author is grateful to the Institut de Recherche
en Math\'ematique de Rennes, IRMAR and the Universit\'e de Rennes 1 
for  their hospitality and support. The author is supported by FAPERJ (Grant number E-26/010.001143/2019).

\section*{Declarations}

{\bf Data availability} Data sharing not applicable to this article as no datasets were generated
or analysed during the current study.


\begin{thebibliography}{X}

\bibitem{BPV} \textsc{W. Barth}, \textsc{C. Peters}, \textsc{A. Van de Ven}, 
\textit{Compact complex surfaces}. Springer-Verlag (1984).

\bibitem{Beauville} \textsc{A.Beauville}, \textit{Complex manifolds with split tangent bundle}, 
Vol. en m\'emoire de M.Schneider, de Gruyter, \`a para\^itre.

\bibitem{Bombieri}  \textsc{E. Bombieri} and  \textsc{D. Mumford}, \textit{Enriques' Classification of Surfaces in Char, p, II}, in Complex Analysis 
and Algebraic Geometry, Iwanami Shoten, 1977, pp, 23-42.

\bibitem{Dumitrescu} \textsc{S. Dumitrescu}, \textit{Connexions affines et projectives sur les surfaces complexes compactes}, Math. Zeit. 264 (2010), 301-316.

\bibitem{Inoue} \textsc{M. Inoue}, \textit{On surfaces of class $V II_0$}, Invent. Math., 24, (1974), 269-310.

\bibitem{Kob1} \textsc{M. Inoue} \textsc{S. Kobayashi} and \textsc{T. Ochiai},
\textit{Holomorphic affine connections on compact complex surfaces}. 
J. Fac. Sci. Univ. Tokyo 27 (1980) (to appear). 

\bibitem{FallaLoray} \textsc{M. Falla Luza} and \textsc{F. Loray}, \textit{Projective structures and 
neighborhood of rational curves}. 
\url{https://hal.archives-ouvertes.fr/hal-01567347v2}

\bibitem{Gunning} \textsc{R. Gunning}, \textit{Lectures on Riemann surfaces}, 
Princeton Mathematical Notes, (1966).

\bibitem{Kato}  \textsc{M. Kato}, \textit{On characteristic forms on complex manifolds}. 
Jour. of Algebra 138 (1991), 424-439. 

\bibitem{Klingler}  \textsc{B. Klingler}, \textit{Structures affines et projectives sur les surfaces complexes}, Ann. Inst. Fourier, Grenoble, 48(2), (1998), 441-477.

\bibitem{Kob2} \textsc{S. Kobayashi} and \textsc{T. Ochiai},
\textit{Holomorphic projective structures on compact complex surfaces}. Math. Ann. 249 (1980) 75-94.

\bibitem{Kob3} --------, \textit{Holomorphic projective structures on compact complex surfaces}. II, Math. Ann. 255 (1981), 519-521.

\bibitem{Kodaira} \textsc{K. Kodaira}, 
\textit{On the structure of compact complex analytic surfaces I, II, III}. Amer. J. Math. 86 
(1964), 751-798; 88 (1966), 682-721; 90 (1968), 55-83.

\bibitem{Alcides} \textsc{A. Lins Neto}, \textit{Curvature of pencil of foliations}. Ast\'erisque, 
(296): 167-190, 2004.

\bibitem{Maehara} \textsc{K. Maehara}, \textit{On elliptic surfaces whose first Betti numbers are odd}. Intl. Symp. on Alg. Geom., Kyoto, (1977), 565-574.

\bibitem{Molzon} \textsc{R. Molzon} and \textsc{K. Pinney Mortensen}, \textit{The 
Schwarzian derivate for maps between manifolds with complex projective connections}. 
Transactions of the American Mathematical Society Volume 348, Number 8, August 1996. 

\bibitem{Pereira} \textsc{J. V. Pereira} and \textsc{L. Pirio}, \textit{An invitation to web geometry}. 
Publica\c{c}\~oes Matem\'aticas do IMPA. Instituto Nacional de Matem\'atica Pura e Aplicada (IMPA), Rio de Janeiro, 2009.

\bibitem{Puchuri} \textsc{L. Puchuri}, \textit{Classification of flat pencils of foliations on compact complex surfaces}. 	Annales de l'Institut Fourier. Tome 70 (2020) no. 5, pp. 2191-2214.

\bibitem{Robert} \textsc{G. Robert}, \textit{Poincar\'e maps and Bol's Theorem}. 
Available eletronically at \url{http://park.itc.u-tokyo.ac.jp/MSF/topology/conference/GHC/data/Robert.pdf}

\bibitem{Suwa} \textsc{T. Suwa}, \textit{Compact quotient spaces of $\C^2$ by affine transformation 
groups}, J. Diff. Geom. 10, (1975), 239-252.

\bibitem{Vitter} \textsc{A. Vitter}, \textit{Affine structures on compact complex manifolds}, 
Invent. Math., 17 (1972), 231-244.

\bibitem{Zhao} \textsc{ShengYuan Zhao}. \textit{Groupes kleiniens birationnels en dimension deux}. 
L'Universit\'e de Rennes 1 Thesis 2020.
\url{https://perso.univ-rennes1.fr/shengyuan.zhao/index.html/paper/main.pdf}

\end{thebibliography}
\end{document}